\documentclass[11pt,reqno]{amsart}
\usepackage[utf8]{inputenc}
\usepackage[T1,T2A]{fontenc}
\usepackage{amsmath,amssymb,amsthm,mathtools}
\usepackage[margin=1in]{geometry}
\usepackage{enumitem}
\usepackage{array}
\usepackage{graphicx}
\usepackage[colorlinks=true,linkcolor=blue,citecolor=blue,urlcolor=blue]{hyperref}

\newtheorem{theorem}{Theorem}
\newtheorem{lemma}[theorem]{Lemma}
\newtheorem{proposition}[theorem]{Proposition}
\newtheorem{corollary}[theorem]{Corollary}
\theoremstyle{definition}
\newtheorem{remark}[theorem]{Remark}
\newtheorem{example}[theorem]{Example}

\newcommand{\N}{\mathbb{N}}
\newcommand{\Z}{\mathbb{Z}}
\newcommand{\floor}[1]{\left\lfloor #1 \right\rfloor}

\newcommand{\abs}[1]{\left| #1 \right|}
\newcommand{\Tar}{\text{Ч}}

\begin{document}

\title{Asymptotics of the Tchoukaillon array and a conjecture of Beluhov}
\author{Shisheng Li}
\address{University of Science and Technology of China, Hefei, China}
\email{shisheng@mail.ustc.edu.cn}
\date{}

\begin{abstract}
The \emph{Tchoukaillon array} is an infinite array of the positive integers,
arising from a one-row Mancala solitaire, in which each positive integer occurs
exactly once. Its zeroth column is the Flavius Josephus sieve and its zeroth row
is the sequence of Tchoukaillon numbers; the asymptotics of these two edges are
classical results of Andersson and of Broline and Loeb. On the basis of numerical
evidence, N.~Beluhov conjectured (as relayed by Knuth) that the general entry
$\Tar_{i,j}$ satisfies $\Tar_{i,j}\approx(\pi i+2j)^2/(4\pi)$ as $i,j\to\infty$.
We prove this conjecture. In fact we establish the stronger uniform estimate
\[
  \Tar_{i,j}=\frac{(\pi i+2j+2)^2}{4\pi}+O\!\big((i+j+1)^{4/3}\big),
\]
in which both constants $\pi$ and $2$ are produced by the array's own recursion
through a Wallis product, independently of the two edge theorems. Equivalently, the
square root of the entry is asymptotically linear,
$\sqrt{\Tar_{i,j}}=\tfrac{\sqrt\pi}{2}\,i+\tfrac{1}{\sqrt\pi}\,(j+1)+O((i+j+1)^{1/3})$,
the linear blend of the two edge growth-rates. As corollaries
we obtain that the level regions $\{\Tar_{i,j}\le V\}$ are triangles up to a
boundary of width $O(V^{1/6})$, and an $O(\sqrt{M})$ algorithm that locates the
row and column of a given integer~$M$.
\end{abstract}

\maketitle

\section{Introduction}\label{sec:intro}

\subsection{Mancala solitaire}
Mancala is a family of ``sowing'' board games played across the world. We are
concerned with a one-row solitaire version, called \emph{Tchouka}, in the form
used by Knuth \cite[\S7.5.1]{knuth14A}. There is a row of pits indexed
$0,1,2,3,\dots$; pit~$k$ holds $p_k$ stones, and there are $s$ stones in all.
Pit~$0$ is a store. A legal move chooses an index $k>0$ with $p_k=k$, removes all
$k$ stones from pit~$k$, and sows them one apiece into the pits
$k-1,k-2,\dots,1,0$ to its left. The object is to move every stone into pit~$0$.

The basic fact \cite{BrolineLoeb1995,knuth14A} is that for each $s$ there is a
\emph{unique} starting configuration, reachable by a greedy construction, from
which the player can win. Consequently every quantity attached to ``the winning
game with $s$ stones''---how many stones sit in pit~$k$, how many times pit~$k$ is
played---is a well-defined function of $s$, and the stones and pits may be
dispensed with in favour of the arithmetic of these functions. For other
combinatorial studies of Tchoukaillon and related solitaires, see
\cite{Dukes2021,JTT2013}.

\subsection{The Tchoukaillon array}
Let $\Tar_n$ denote the least $s$ whose winning configuration uses pit~$n$. The
resulting sequence
\[
  \Tar_1,\Tar_2,\Tar_3,\dots=1,2,4,6,10,12,18,22,30,34,42,48,\dots
\]
is the sequence of \emph{Tchoukaillon numbers}
(OEIS \href{https://oeis.org/A002491}{A002491}). This one-row solitaire and its
name are due to Deledicq and Popova; see \cite{BrolineLoeb1995}.
Knuth \cite[\S7.5.1, Exercise~13]{knuth14A} organises the whole winning structure
into a two-dimensional array. For $n\ge1$ let $\Tar^{(n)}$ be the array of
\emph{order} $n$, with rows $i\ge0$ and columns $0\le j<n$, defined by
\begin{equation}\label{eq:recursion}
\begin{aligned}
  \Tar^{(1)}_{q,0}&=q+1;\\
  \Tar^{(n+1)}_{qn+r,\,j}&=
    \begin{cases}
      \Tar^{(n)}_{q(n+1)+r,\,j}, & j+r<n,\\[2pt]
      \Tar^{(n)}_{q(n+1)+r+1,\,j-1}, & j+r\ge n,
    \end{cases}
    \qquad (i=qn+r,\ 0\le r<n).
\end{aligned}
\end{equation}
Each entry stabilises: $\Tar^{(n)}_{i,j}$ is independent of $n$ once $n\ge i+j+1$.
Indeed, for such $n$ we have $i<n$ and $i+j<n$, so \eqref{eq:recursion} with $q=0$,
$r=i$ takes its first branch and gives $\Tar^{(n+1)}_{i,j}=\Tar^{(n)}_{i,j}$. We write
\begin{equation}\label{eq:limit}
  \Tar_{i,j}:=\Tar^{(\infty)}_{i,j}=\Tar^{(i+j+1)}_{i,j}\qquad(i,j\ge0)
\end{equation}
for the limiting array. Following Knuth, we denote it by the Cyrillic letter
$\Tar$ (Che). Its upper-left corner is
\[
\renewcommand{\arraystretch}{1.15}
\begin{array}{c|cccccccc}
 & j{=}0 & 1 & 2 & 3 & 4 & 5 & 6 & 7\\\hline
i{=}0 & 1 & 2 & 4 & 6 & 10 & 12 & 18 & 22\\
1 & 3 & 5 & 8 & 11 & 16 & 20 & 24 & 32\\
2 & 7 & 9 & 14 & 17 & 23 & 28 & 35 & 40\\
3 & 13 & 15 & 21 & 26 & 33 & 38 & 47 & 53\\
4 & 19 & 25 & 29 & 37 & 44 & 50 & 57 & 68\\
5 & 27 & 31 & 41 & 45 & 55 & 64 & 74 & 81\\
6 & 39 & 43 & 51 & 62 & 69 & 75 & 93 & 98
\end{array}
\]

Figure~\ref{fig:orders} draws the first five orders $\Tar^{(1)},\dots,\Tar^{(5)}$.
Passing from $\Tar^{(n)}$ to $\Tar^{(n+1)}$, recurrence \eqref{eq:recursion}
identifies each order-$(n+1)$ cell with one order-$n$ cell of equal value, so a
fixed value simply \emph{migrates} between successive arrays; for instance the
value $8$ travels $(7,0)\to(3,1)\to(2,1)\to(1,2)$ as $n$ runs through
$1,2,3,4$. A cell attains its final value as soon as $n\ge i+j+1$---that is, once
it reaches the anti-diagonal $i+j=n-1$ (shaded)---and never changes afterwards;
the value $8$, for example, is fixed at $(1,2)$ from order~$4$ on. The shaded
triangle of stabilised entries grows with $n$ and in the limit fills the whole
array $\Tar$.

\begin{figure}[t]
\centering
\includegraphics[width=\textwidth]{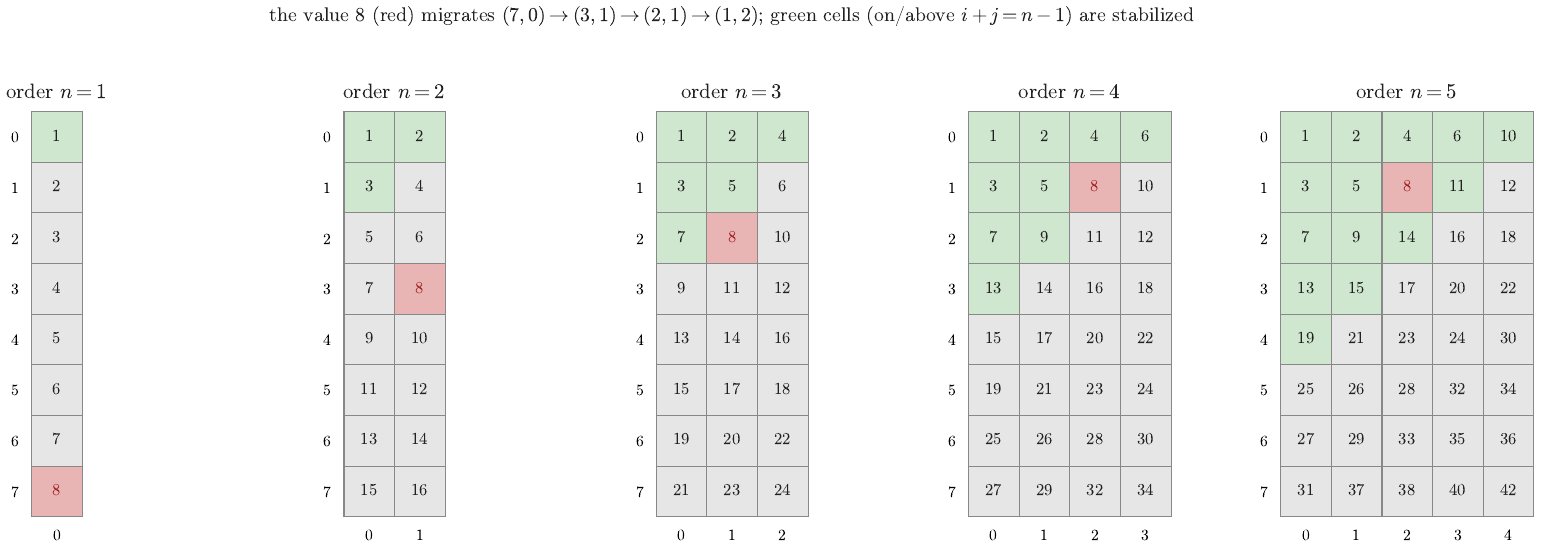}
\caption{The first five orders $\Tar^{(1)},\dots,\Tar^{(5)}$. Green cells are
already stabilised ($i+j+1\le n$, i.e.\ on or above the anti-diagonal
$i+j=n-1$): their values are final and equal to $\Tar_{i,j}$. Grey cells still
change at higher orders. The recurrence \eqref{eq:recursion} moves each value
between consecutive arrays; the value $8$ (red) migrates
$(7,0)\to(3,1)\to(2,1)\to(1,2)$ and stays fixed once it enters the green region
at order~$4$.}
\label{fig:orders}
\end{figure}

\subsection{Known properties}\label{sub:known}
Three facts about $\Tar$ are known; here and below $\N=\{0,1,2,\dots\}$.

\begin{enumerate}[label=\textup{(P\arabic*)},leftmargin=3em]
\item\label{P:bijection}
\emph{Bijection.} $\Tar:\N^2\to\Z^{+}$ is a bijection: each positive integer
occurs exactly once \cite[\S7.5.1]{knuth14A}. Hence
$\#\{(i,j):\Tar_{i,j}\le V\}=\floor{V}$ for every real $V\ge0$ (and $=V$ for integer $V$).
\item\label{P:monotone}
\emph{Monotonicity.} Every row and every column is strictly increasing
\cite[\S7.5.1]{knuth14A}. Thus $\{\Tar_{i,j}\le V\}$ is a lattice down-set.
\item\label{P:edges}
\emph{The two edges.} Column~$0$ is the Flavius Josephus sieve
(OEIS \href{https://oeis.org/A000960}{A000960}) and, by Andersson
\cite{Andersson1998},
\begin{equation}\label{eq:andersson}
  \Tar_{i,0}=\tfrac{\pi}{4}\,i^2+O(i^{4/3}).
\end{equation}
Row~$0$ is the sequence of Tchoukaillon numbers, and by Broline and Loeb
\cite[Thm.~9]{BrolineLoeb1995},
\begin{equation}\label{eq:brolineloeb}
  \Tar_{0,j}=\Tar_{j+1}=\tfrac{1}{\pi}\,(j+1)^2+O(j+1)
\end{equation}
(the single-subscript $\Tar_{j+1}$ being the row-$0$ sequence above).
\end{enumerate}

\subsection{Beluhov's conjecture and our results}
Knuth records Beluhov's conjecture verbatim in the answer to
\cite[\S7.5.1, Exercise~13]{knuth14A}:
\begin{quote}
``Numerical evidence for small $i$ and $j$ has led N.\ Beluhov to conjecture that we
will have $\Tar_{i,j}^{(\infty)}\approx(\pi i+2j)^2/(4\pi)$ in general.''
\end{quote}
The same answer poses two further questions that our method also settles---``Is there
a rapid way to determine which row and column contains a given integer?'' and ``What
is the asymptotic shape of the region occupied by numbers $\le n$?''

\noindent
The conjecture is consistent with \eqref{eq:andersson} on setting $j=0$ and with
\eqref{eq:brolineloeb} on setting $i=0$. Our main result proves it, with an
explicit error term and a naturally centred form of the main term. Throughout we write
\begin{equation}\label{eq:Ndef}
  N:=i+j+1.
\end{equation}

\begin{theorem}\label{thm:main}
As $N=i+j+1\to\infty$,
\begin{equation}\label{eq:main}
  \Tar_{i,j}=\frac{(\pi i+2j+2)^2}{4\pi}+O\!\big(N^{4/3}\big),
\end{equation}
uniformly over all $i,j\ge0$, with an absolute implied constant. In particular
$\Tar_{i,j}=(\pi i+2j)^2/(4\pi)+o(N^2)$, which is Beluhov's conjecture.
\end{theorem}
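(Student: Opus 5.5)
We follow a single value backwards through the orders. Fix $(i,j)$, put $N=i+j+1$, and let $(x_n,y_n)$ be the position at order $n$ of the value $\Tar_{i,j}$, for $n=N,N-1,\dots,1$. Thus $(x_N,y_N)=(i,j)$, and by \eqref{eq:recursion} and \eqref{eq:limit} we have $\Tar_{i,j}=x_1+1$. Reading \eqref{eq:recursion} from order $n+1$ down to order $n$: if $x_{n+1}=qn+r$ with $0\le r<n$, then
\[
  x_n=x_{n+1}+q+\delta_n,\qquad y_n=y_{n+1}-\delta_n,\qquad \delta_n=[\,y_{n+1}+r\ge n\,].
\]
So the whole problem is the asymptotics of this two-dimensional integer dynamical system. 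Once the column counter $y_n$ reaches $0$ it stays there, and the map becomes the Josephus step $x\mapsto x+\lfloor x/n\rfloor$.

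\textbf{Step 1 (structure of the trajectory).} The trajectory splits into two regimes.
\begin{itemize}[leftmargin=2em]
\item \emph{Column-absorbing regime.} Here the quotient $q_n=\lfloor x_{n+1}/n\rfloor$ is small, since initially $x_N=i<N$. The quantity tracked is the remainder $r_n=x_{n+1}-q_n n$.
\item \emph{Josephus regime.} Once $y=0$ the map is the pure Josephus step.
\end{itemize}
The key observation is that, while $q$ is fixed, $r$ grows by $q+\delta$ per step and $n$ drops by $1$. Hence each block of constant quotient $k$ is an essentially linear motion of the pair $(n,r)$. A $\delta$-step fires exactly when $r$ crosses the threshold $n-y$, and the quotient increases when $r$ reaches $n$.

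I will therefore parametrise the trajectory by the quotient $k=0,1,2,\dots$. Let $n_k$ be the order at which the quotient first becomes $k$. I will derive an exact recursion for $(n_k,\,y_{n_k},\,r\text{ at entry})$, valid up to $O(1)$ per block.

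\textbf{Step 2 (continuum limit and the Wallis product).} Solving the block recursion to leading order should give multiplicative relations of the form
\[
  \frac{n_{k}}{n_{k+1}}\approx \frac{2k+2}{2k+1}\quad\text{or}\quad \frac{(2k)^2}{(2k-1)(2k+1)}.
\]
The exact form depends on whether the column counter has already been exhausted. This is the mechanism behind Andersson's $\pi/4$. The column contributes additively: each unit of $j$ delays the onset of the Josephus regime by an amount linear in $n$.

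Telescoping over $k\le K$ then expresses $x_1$ as
\[
  x_1 \approx \Bigl(\tfrac{\pi}{4}\Bigr)\bigl(i+\tfrac{2}{\pi}(j+1)\bigr)^2
\]
times a partial Wallis product. This equals $(\pi i+2j+2)^2/(4\pi)$, and the partial product converges to $\pi/2$ with relative error $O(1/K)$. The shift $j\mapsto j+1$ should appear naturally because the $\delta$-threshold is $y+r\ge n$ rather than $>n$.

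\textbf{Step 3 (error control; the main obstacle).} The difficulty is bounding the accumulated floor and indicator errors uniformly in $i,j$. Each block introduces an $O(1)$ error in the entry remainder, and this error is amplified by the subsequent product of ratios, roughly by a factor $N/n$ in the scale of $x$.

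The plan is to run the exact block analysis only for $k\le K$. For larger quotients we switch to a crude bound: $x_n$ grows monotonically with sandwich estimates $x\bigl(1+\tfrac1n\bigr)-1\le x_n\le x\bigl(1+\tfrac1n\bigr)+2$. This gives a tail contribution of relative size $O(1/K)$ times $N^2$, that is $O(N^2/K)$. The first $K$ blocks contribute an error of order $K^2N$: there are $K$ blocks, each carrying an $O(N)$-amplified unit error, weighted by $k$.

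Balancing $N^2/K$ against $K^2N$ gives $K=N^{1/3}$ and total error $O(N^{4/3})$, as claimed. The hardest part is making the per-block error genuinely $O(1)$ uniformly in the ratio $j/i$. This includes the transitional block in which $y$ hits $0$, and the degenerate edge cases $i=0$ (where $x$ starts at $0$, so early blocks are pure column consumption) and $j=0$ (where the statement should reduce to a self-contained reproof of \eqref{eq:andersson}). I would first handle these two edges, as sanity checks against \eqref{eq:andersson} and \eqref{eq:brolineloeb}, and then interpolate the block recursion between them.
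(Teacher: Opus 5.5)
\textbf{There are genuine gaps: the core recursion is guessed rather than derived, and the error bookkeeping gives $O(N^{5/3})$, not the stated $O(N^{4/3})$.}

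Your set-up is sound and in fact parallels the paper's. Your quotient $q_n=\floor{x_{n+1}/n}$ is the paper's drop $b_{n+1}$, and your $\delta_n$ is $a_{n+1}-b_{n+1}$. Your block-entry orders $n_k$ are essentially the conjugate widths $S_k$ shifted by one. Your crude tail sandwich is the mass estimate of Lemma~\ref{lem:mass}, which gives $M=Kn_K^2+O(n_K^2)$.

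\textbf{First gap: Step~2.} This step carries the theorem, and it is guessed rather than derived; the guess is structurally wrong. The ratio $n_k/n_{k+1}$ is not a universal factor. There is also no change of law when the column counter is exhausted: in the interior $y_n$ stays roughly proportional to $n$ and runs out only at the smallest orders (for $M=100$ it first reaches $0$ at order~$1$).

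What makes the block recursion close exactly is to track \emph{both} staircases, $b$ and $a=b+\delta$. At the exit order of each level the tail sum truncates, giving the exact nearest-integer system $(2m-1)R_m=N+\sum_{r<m}S_r+E_m$, $2mS_m=i+\sum_{r\le m}R_r+F_m$, with $\abs{E_m},\abs{F_m}\le m$ (Lemma~\ref{lem:filter}). Without the rounding this system is linear in $(j+1,i)$ and diagonalises into two Wallis modes:
\begin{itemize}
\item $(j+1)c_m$, with ratio $c_k/c_{k+1}=\frac{2k+2}{2k+1}$;
\item $i\,w_m$, with ratio $w_k/w_{k+1}=\frac{2k+1}{2k}$.
\end{itemize}
Both modes are present \emph{simultaneously} at every level, in proportions fixed by $(j+1,i)$. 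Their superposition $\sqrt m\,\bar R_m=(j+1)/\sqrt\pi+i\sqrt\pi/2+O(N/m)$ is where the interior constant comes from. A switch between two single-mode laws cannot produce this linear blend. Your formula for $x_1$ (the target value times a partial product tending to $\pi/2$) is inconsistent as written, which suggests the constants were inserted to match the answer rather than produced by the recursion.

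\textbf{Second gap: the error budget.} Balancing $N^2/K$ against $K^2N$ gives $K\asymp N^{1/3}$ and a total error of order $N^{5/3}$, not $N^{4/3}$. That would still give $o(N^2)$, but not the theorem.

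To reach $N^{4/3}$ you must read $M$ at level $K\asymp N^{2/3}$ and know $n_K$ there to within $O(1)$. Then the quantisation costs only $O(Kn_K)=O(N\sqrt K)$. Under your accumulate-and-amplify model of per-block errors this is out of reach.

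The missing idea is that the errors do not accumulate at all. In the nearest-integer system the averaging denominators $2m-1$ and $2m$ exactly match the number of earlier strays. By induction this gives $\abs{R_m-\bar R_m}<1$ and $\abs{S_m-\bar S_m}<1$ at every level (Lemma~\ref{lem:track}).

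You also need a no-skipping statement at the reading level: if the quotient jumps past $K$, the entry value of $x$ is not $\approx Kn_K$. The paper gets this from the condition $R_m\ge 2m+1$ (Lemma~\ref{lem:noskip}), which holds at $m\asymp N^{2/3}$.
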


Equivalently, and more transparently, the \emph{square root} of the entry is
asymptotically linear,
\begin{equation}\label{eq:sqrtintro}
  \sqrt{\Tar_{i,j}}=\frac{\sqrt\pi}{2}\,i+\frac{1}{\sqrt\pi}\,(j+1)+O\!\big(N^{1/3}\big),
\end{equation}
the two slopes being the growth rates of the two edges---Andersson's column~$0$ and
Broline--Loeb's row~$0$---and the interior their linear blend. This \emph{square-root
law} is the geometric heart of the theorem: it is exactly why the level contours are
straight (Corollary~\ref{cor:contour}). The proof does not use
\ref{P:bijection}--\ref{P:edges}; it derives the constants $\pi$ and $2$ from the
recursion \eqref{eq:recursion} alone, through the central binomial coefficients
(a Wallis product) forced by the width recurrence of \S\ref{sec:filter}. Two
corollaries follow.

\begin{corollary}[Contour linearity]\label{cor:contour}
Write $\rho=\pi i+2j$ and $\mathcal R_V=\{(i,j)\in\N^2:\Tar_{i,j}\le V\}$. There
is an absolute constant $K$ such that, for all large integers $V$,
\[
  \big\{\rho\le 2\sqrt{\pi V}-2-KV^{1/6}\big\}\subseteq\mathcal R_V\subseteq
  \big\{\rho\le 2\sqrt{\pi V}-2+KV^{1/6}\big\}.
\]
Thus the level regions are triangles up to a boundary layer of width
$O(V^{1/6})$, answering (in a strong quantitative form) Knuth's question on the
asymptotic shape of $\{\Tar_{i,j}\le n\}$.
\end{corollary}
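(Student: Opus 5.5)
This corollary follows from Theorem~\ref{thm:main}, used in its square-root form \eqref{eq:sqrtintro}, together with monotonicity \ref{P:monotone}. Only the first is really needed, since the statement is pointwise in $(i,j)$. Fix a large integer $V$. We relate the condition $\Tar_{i,j}\le V$ to $\rho=\pi i+2j$ through $\sqrt{\Tar_{i,j}}$. Multiplying \eqref{eq:sqrtintro} by $2\sqrt\pi$ gives
\[
  2\sqrt{\pi\,\Tar_{i,j}}=\rho+2+E_{i,j},\qquad \abs{E_{i,j}}\le C N^{1/3},
\]
with an absolute constant $C$. The task is therefore to convert the error $N^{1/3}$, stated in terms of $N=i+j+1$, into an error $V^{1/6}$.

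\emph{Step 1: localisation.} Only cells with $N\ll V^{1/2}$ need attention. Since $\rho\ge 2(i+j)=2(N-1)$, we get $2\sqrt{\pi\Tar_{i,j}}\ge 2N - CN^{1/3}\ge N$ once $N$ exceeds an absolute constant $N_0$. Choose $A$ absolute with $A\sqrt V\ge N_0$ for large $V$; for instance $A=4\sqrt\pi$ works. Then any cell with $N> A\sqrt V$ satisfies $\Tar_{i,j}>V$ and also $\rho>2\sqrt{\pi V}+KV^{1/6}$, provided $K$ is chosen suitably. So for $N>A\sqrt V$ such a cell lies in neither of the two sets, and both inclusions hold trivially for it. For the remaining cells, $N\le A\sqrt V$, so $\abs{E_{i,j}}\le CA^{1/3}V^{1/6}$.

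\emph{Step 2: the two inclusions.} Set $K:=CA^{1/3}+1$. Suppose first that $\rho\le 2\sqrt{\pi V}-2-KV^{1/6}$. Then $N\le \rho/2+1\le A\sqrt V$, so
\[
  2\sqrt{\pi\Tar_{i,j}}\le \rho+2+CA^{1/3}V^{1/6}\le 2\sqrt{\pi V},
\]
hence $\Tar_{i,j}\le V$ and $(i,j)\in\mathcal R_V$.

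Conversely, suppose $(i,j)\in\mathcal R_V$. By Step~1 we have $N\le A\sqrt V$, and so
\[
  \rho+2\le 2\sqrt{\pi\Tar_{i,j}}+CA^{1/3}V^{1/6}\le 2\sqrt{\pi V}+KV^{1/6}.
\]
This is the right-hand inclusion.

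The only delicate point is Step~1. The error in Theorem~\ref{thm:main} grows with $N$, not with $V$, so we must first rule out cells with enormous $N$ sitting on the boundary. The lower bound $\rho\ge 2(N-1)$ handles this, because the main term dominates the $N^{1/3}$ error uniformly. The uniformity of the implied constant in Theorem~\ref{thm:main} is essential here, including near the edges $i=0$ or $j=0$.

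Passing from \eqref{eq:main} to \eqref{eq:sqrtintro} is routine. Write $\Tar=X^2(1+O(N^{4/3}/X^2))$ with $X=(\pi i+2j+2)/(2\sqrt\pi)\asymp N$, and take square roots. If one prefers to avoid square roots, one can instead compare $V$ with $X^2$ directly, using $X\asymp\sqrt V$ on the relevant cells.
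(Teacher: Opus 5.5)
Your proof is correct and follows essentially the paper's route. Both arguments use only the pointwise estimate of Theorem~\ref{thm:main}, confine attention to cells where its $N$-dependent error becomes $O(V^{1/6})$, and let the main term decide the sign of $\Tar_{i,j}-V$ everywhere else. The only difference is cosmetic: you work in the square-root form \eqref{eq:sqrtintro}, which is linear in $\rho$, and localise by $N\le A\sqrt V$, whereas the paper stays with \eqref{eq:main}, localises to $\tfrac12 s\le \rho+2\le 2s$ with $s=2\sqrt{\pi V}$, and factors $x^2-s^2=(x-s)(x+s)$ with $x=\rho+2$.
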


\begin{proposition}[Locating an integer]\label{prop:locate}
Given a positive integer $M$, set $(X,Y)\leftarrow(M-1,M)$ and $k\leftarrow1$.
While $Y\ne k$, increment $k\leftarrow k+1$ and update, simultaneously,
\(
  (X,Y)\leftarrow\big(X-\floor{Y/k},\,Y-\floor{X/k}\big).
\)
On exit $M=\Tar_{i,j}$ with $i=X$ and $j=k-X-1$. The loop halts after
$N-1=i+j=\Theta(\sqrt{M})$ iterations (none when $M=1$), so determining the row and
column of $M$ costs $O(\sqrt{M})$ arithmetic operations (in the unit-cost model; the
bit complexity carries the usual cost of the integer divisions).
\end{proposition}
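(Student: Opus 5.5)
The plan is to read the algorithm as tracking the cell occupied by the fixed value $M$ as the order runs through $k=1,2,3,\dots$, inverting the migration described by \eqref{eq:recursion}. The invariant I aim to prove is this: after the step that sets the counter to $k$, the value $M$ sits at cell $(X,\,Y-X-1)$ of $\Tar^{(k)}$. Equivalently, $X$ is the row of $M$ in $\Tar^{(k)}$ and $Y=i+j+1$ is its anti-diagonal index plus one. At $k=1$ this holds, since $\Tar^{(1)}_{q,0}=q+1$ puts $M$ at row $M-1$, column $0$, giving $X=M-1$ and $Y=M$.

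\emph{Inverting one step.} Suppose $M$ sits at $(a,b)$ in $\Tar^{(n)}$ with $0\le b<n$. Write $a=q(n+1)+s$ with $0\le s\le n$. There are two candidate preimages in \eqref{eq:recursion}.
\begin{itemize}
\item The first branch needs $s=r<n$ and $b+r<n$.
\item The second branch needs $s=r+1$ and $b+1+r\ge n$.
\end{itemize}
These conditions say that $(a,b)$ is the image of order-$(n+1)$ cell $(qn+s,\,b)$ when $s+b<n$, and of cell $(qn+s-1,\,b+1)$ when $s+b\ge n$. Since the recursion identifies cells bijectively, this is exactly where $M$ lands in $\Tar^{(n+1)}$.

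In both cases the new anti-diagonal index is $Y'=a+b+1-q=Y-\floor{X/(n+1)}$, and the new row is $X'=a-q-[s+b\ge n]$. To match the algorithm's update with $k=n+1$, I must show $q+[s+b\ge n]=\floor{Y/(n+1)}$. Now $Y=q(n+1)+s+b+1$, and $s\le n$, $b\le n-1$ give $0<s+b+1\le 2n<2(n+1)$. Hence $\floor{Y/(n+1)}=q+[s+b+1\ge n+1]$, which is the required identity. This proves the invariant by induction, with the simultaneous update in the statement.

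\emph{Termination, correctness, and cost.} Let $(i,j)$ be the limiting cell of $M$ and $N=i+j+1$. By \eqref{eq:limit} and the stabilisation remark, $M$ sits at $(i,j)$ in $\Tar^{(n)}$ for all $n\ge N$, so $Y_N=N$. For $n<N$, suppose the cell of $M$ in $\Tar^{(n)}$ had $Y_n\le n$. That cell would already be stabilised, hence would equal $(i,j)$, forcing $N=Y_n\le n<N$, a contradiction. So $Y_k\ne k$ for all $k<N$, and the loop exits exactly at $k=N$ with $X=i$ and $j=k-X-1$, after $N-1$ iterations (none when $M=1$, where $Y=1=k$). Finally, $N=\Theta(\sqrt M)$ follows from Theorem~\ref{thm:main}, equivalently \eqref{eq:sqrtintro}: $\sqrt M$ lies between constant multiples of $N$ up to $O(N^{1/3})$. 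Each iteration uses $O(1)$ arithmetic operations, so the total cost is $O(\sqrt M)$.

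The main obstacle is the floor identity in the inversion step. It hinges on the column bound $b<n$ to keep $s+b+1$ below $2(n+1)$, and I will make sure that this bound (columns $0\le j<n$ in $\Tar^{(n)}$) is invoked explicitly.
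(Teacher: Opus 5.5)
Your proof is correct. Its core is exactly the paper's proof of Lemma~\ref{lem:trace}: you invert \eqref{eq:recursion} one order at a time and check $\floor{Y/(n+1)}=q+[s+b\ge n]$ using $1\le s+b+1\le 2n$.

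Two steps are argued differently.

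To show the loop first exits at $k=N$, you use stabilisation together with uniqueness of $M$'s cell in each finite-order array: a cell with $Y_n\le n$ is already final, hence must be $(i,j)$, which forces $N\le n$. The paper instead observes that $Y_k-k$ drops by $\floor{X_{k-1}/k}+1\ge1$ at every step and equals $0$ at $k=N$, so it is positive before that. The paper's version is a one-line consequence of the recurrence itself. Yours explains conceptually that the loop test $Y\ne k$ means ``not yet stabilised''.

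For $N=\Theta(\sqrt M)$ you invoke Theorem~\ref{thm:main}. This is legitimate, since nothing in its proof uses the proposition, but the paper deliberately avoids it. It gets $M\le N^2$ from \eqref{eq:massN}. The harder direction, $N=O(\sqrt M)$, is the one the cost bound actually needs. The paper obtains it from Lemma~\ref{lem:track} at the fixed level $m=3$ combined with \eqref{eq:crossread}, giving $M\ge(\tfrac38N-1)^2$ for $N\ge22$. That route yields explicit constants and keeps the algorithm's analysis independent of the $m\asymp N^{2/3}$ balancing. Yours is shorter once the main theorem is available.
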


Proposition~\ref{prop:locate} answers Knuth's question whether there is a rapid
way to determine which row and column contain a given integer; it is an immediate
reading of the trace recurrence of \S\ref{sec:trace}, run forwards from a given
value.

\begin{figure}[t]
\centering
\includegraphics[width=0.60\textwidth]{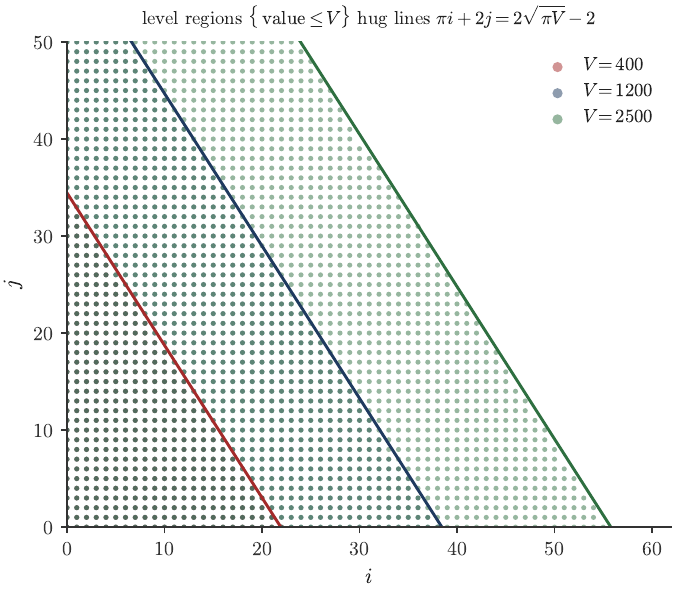}
\caption{The level regions $\{\Tar_{i,j}\le V\}$ (dots) for $V=400,1200,2500$,
each hugging the straight line $\pi i+2j=2\sqrt{\pi V}-2$
(Corollary~\ref{cor:contour}). Since each region is a triangle up to a thin
boundary layer, and by \ref{P:bijection} it contains exactly $V$ cells, its area
$c^2/(4\pi)=V$ forces $c=2\sqrt{\pi V}$; this is the geometry behind
Theorem~\ref{thm:main}.}
\label{fig:contour}
\end{figure}

\subsection{Method and intuition}
The proof rests on one idea: \emph{follow a single integer.} Fix the value
$M=\Tar_{i,j}$ and watch which cell it occupies as the order $k$ of the array
grows from $1$---where $M$ sits at $(M-1,0)$---up to $k=N$, where it has arrived
at its final cell $(i,j)$ (Figure~\ref{fig:trace}). Written in the coordinates
$X=\text{row}$ and $Y=\text{row}+\text{column}+1$, this migration obeys an
\emph{exact} coupled floor recurrence (Lemma~\ref{lem:trace})---Beluhov's problem
restated with the array, and the game, stripped away.

At each step the two coordinates drop by amounts $a_k,b_k$; how long each drop
stays above a level $m$ is recorded by the \emph{conjugate widths} $R_m,S_m$, the
transpose of the drop staircase (Figure~\ref{fig:conjugate}). Read off at the orders
$k=R_m$ where the staircase crosses a level, the floors do not disappear---they become
\emph{nearest-integer rounding} in a lower-triangular system for the widths
(Lemma~\ref{lem:filter}). Removing only this
rounding diagonalises the system into two scalar \emph{Wallis modes}: a contracting
mode driven by the gap datum $j+1$, and its reciprocal driven by the row datum $i$
(Lemma~\ref{lem:modes}). Their sizes are central binomial coefficients, and it is
there, through a Wallis product, that $\pi$ enters---from the recurrence itself, not
from the two edge theorems. Restoring the rounding moves each width by less than one
(Lemma~\ref{lem:track}), so the integer staircase stays within one cell of the
explicit profile at every level. Reading $M$ off a single level, the whole
argument condenses to one line:
\begin{equation}\label{eq:route}
  \sqrt{M}\ \approx\ \sqrt m\,R_m\ \approx\ (j+1)\sqrt m\,c_{m-1}+\frac{i}{2\sqrt m\,c_m}
  \ \longrightarrow\ \frac{j+1}{\sqrt\pi}+\frac{i\sqrt\pi}{2},
\end{equation}
where $c_m=\binom{2m}{m}/4^m$; balancing two errors at $m\asymp N^{2/3}$ turns
``$\approx$'' into the $O(N^{1/3})$ of \eqref{eq:sqrtintro}. Geometrically the
argument computes Figure~\ref{fig:contour}: by the bijection~\ref{P:bijection} a
value equals the number of cells beneath its contour, essentially the area under a
straight line $\pi i+2j=\text{const}$. Table~\ref{tab:notation} collects the
recurring symbols.

\begin{table}[t]
\centering
\renewcommand{\arraystretch}{1.3}
\begin{tabular}{@{}c >{\raggedright\arraybackslash}p{0.84\textwidth}@{}}
\hline
symbol & meaning \\ \hline
$X_k,\ Y_k$ & coordinates of the traced value at order $k$: $X_k$ its row,
   $Y_k=\text{row}+\text{column}+1$ \\
$a_k,\ b_k$ & the amounts by which $X,Y$ drop at step $k$
   ($a_k=\floor{Y_{k-1}/k}$, $b_k=\floor{X_{k-1}/k}$) \\
$R_m,\ S_m$ & conjugate widths, defined as counts \eqref{eq:width}: when the level is reached, the last order at which $a_k\ge m$ (resp.\ $b_k\ge m$), else $1$ \\
$E_m,\ F_m$ & nearest-integer remainders in the width recurrence ($1-m\le E_m\le m-1$, $1-m\le F_m\le m$) \\
$\bar R_m,\ \bar S_m$ & the error-free profile: $\bar R_m=(j+1)c_{m-1}+iw_m$, $\bar S_m=(j+1)c_m+iw_m$ \\
$c_m,\ w_m$ & $c_m=\binom{2m}{m}/4^m$ and $w_m=1/(2mc_m)$: the two Wallis modes that supply $\pi$ \\
$\mathcal C(\alpha)$ & the limiting density: $\Tar_{i,j}=N^2\mathcal C(i/N)+O(N^{4/3})$, with $\alpha=i/N$ and $\mathcal C$ from \eqref{eq:Cdef} \\
\hline
\end{tabular}
\caption{Notation, with intuitive meanings.}
\label{tab:notation}
\end{table}

The remainder of the paper is organised as follows.
\S\ref{sec:trace} establishes the trace recurrence and
Proposition~\ref{prop:locate}. \S\ref{sec:mass} proves the two-sided estimate.
\S\ref{sec:filter} proves the no-skipping lemma and the width recurrence.
\S\ref{sec:wallis} solves it in closed form as two Wallis modes.
\S\ref{sec:sqrt} proves the square-root law, hence Theorem~\ref{thm:main}, and
\S\ref{sec:contour} Corollary~\ref{cor:contour}. \S\ref{sec:remarks} collects open
problems.

\section{The exact trace recurrence}\label{sec:trace}

Fix $i,j\ge0$, put $N=i+j+1$ and $M=\Tar_{i,j}$. Because
$\Tar^{(1)}_{q,0}=q+1$, the value $M$ occupies position $(M-1,0)$ in the array of
order~$1$. We trace this same value through the arrays of orders
$1,2,\dots,N$; let its position in $\Tar^{(k)}$ be $(I_k,J_k)$, so that
\[
  (I_1,J_1)=(M-1,0),\qquad (I_N,J_N)=(i,j).
\]
Introduce
\begin{equation}\label{eq:XY}
  X_k:=I_k,\qquad Y_k:=I_k+J_k+1,
\end{equation}
so that
\begin{equation}\label{eq:XYbc}
  (X_1,Y_1)=(M-1,M),\qquad (X_N,Y_N)=(i,N),
\end{equation}
and, since $0\le J_k<k$,
\begin{equation}\label{eq:gap}
  1\le Y_k-X_k\le k.
\end{equation}

\begin{figure}[t]
\centering
\includegraphics[width=0.58\textwidth]{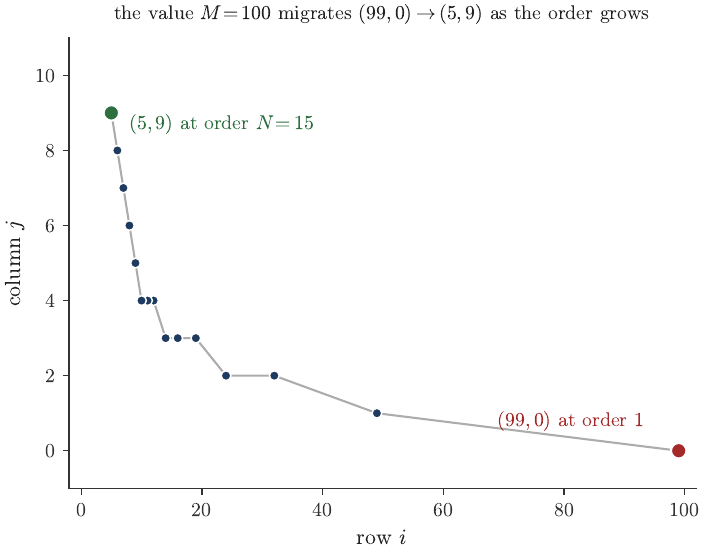}
\caption{Following the value $M=100$: the cell $(I_k,J_k)$ it occupies migrates
from $(99,0)$ at order $1$ to its final position $(5,9)$ at order $N=15$. In the
coordinates $X_k=I_k$, $Y_k=I_k+J_k+1$ this path obeys the exact recurrence
\eqref{eq:trace}.}
\label{fig:trace}
\end{figure}

\begin{lemma}[Trace recurrence]\label{lem:trace}
For $2\le k\le N$,
\begin{equation}\label{eq:trace}
  X_k=X_{k-1}-\floor{\frac{Y_{k-1}}{k}},\qquad
  Y_k=Y_{k-1}-\floor{\frac{X_{k-1}}{k}}.
\end{equation}
\end{lemma}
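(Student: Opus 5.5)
The plan is to read the recurrence \eqref{eq:recursion} backwards. Fix $k$ with $2\le k\le N$ and put $n=k-1$. The recursion expresses every order-$k$ cell $(I,J)$, with $I=qn+r$ and $0\le r<n$, as equal to a specific order-$n$ cell $(I',J')$:
\[
(I',J')=\begin{cases}(qk+r,\,J), & J+r<n,\\ (qk+r+1,\,J-1), & J+r\ge n.\end{cases}
\]

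\textbf{Step 1: the cell map is a bijection.} First I would check that $\phi:(I,J)\mapsto(I',J')$ is a bijection from the cells of order $k$ (columns $0\le J<k$) onto the cells of order $n$ (columns $0\le J'<n$). By induction on the order, starting from $\Tar^{(1)}_{q,0}=q+1$, this makes every $\Tar^{(k)}$ a bijection onto $\Z^+$. Consequently the value $M$ occupies exactly one cell $(I_k,J_k)$ at each order, and $\phi(I_k,J_k)=(I_{k-1},J_{k-1})$, so the trace is well defined.

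\textbf{Step 2: the inverse of $\phi$.} For bijectivity I would write down the inverse explicitly. Given $(I',J')$ with $0\le J'<n$, put $I'=qk+s$ with $0\le s<k$.
\begin{itemize}[leftmargin=2em]
\item If $s\le n-1$ and $s+J'<n$, the preimage is $(qn+s,\,J')$, in the first branch.
\item Otherwise $s\ge1$, and the preimage is $(qn+s-1,\,J'+1)$, in the second branch. Here one checks $r=s-1\le n-1$ and $r+J'+1\ge n$.
\end{itemize}
These two cases are exhaustive and disjoint. The point needing care is the edge case $s=0$: it forces the first branch, since $J'<n$. This case analysis on the residue of $I'$ modulo $k$ is where I expect the only real bookkeeping.

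\textbf{Step 3: verify \eqref{eq:trace} in each branch.} Take $(I,J)=(I_k,J_k)$, $I=qn+r$, and $(I',J')=(I_{k-1},J_{k-1})$, so that $X_{k-1}=I'$ and $Y_{k-1}=I'+J'+1$.
\begin{itemize}[leftmargin=2em]
\item \emph{First branch.} Here $I'=qk+r$ with $r<n<k$, so $\floor{X_{k-1}/k}=q$. Also $Y_{k-1}=qk+(r+J+1)$ with $1\le r+J+1\le n<k$, so $\floor{Y_{k-1}/k}=q$. Then $X_k=qn+r=I'-q$ and $Y_k=qn+r+J+1=Y_{k-1}-q$, which is \eqref{eq:trace}.
\item \emph{Second branch.} Here $I'=qk+r+1$ with $r+1\le n<k$, so $\floor{X_{k-1}/k}=q$. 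Also $Y_{k-1}=qk+(r+J+1)$, and $r+J\ge n$ together with $r\le n-1$, $J\le n$ gives $k\le r+J+1\le 2k-2$, so $\floor{Y_{k-1}/k}=q+1$. Then $X_k=qn+r=I'-(q+1)$ and $Y_k=qn+r+J+1=Y_{k-1}-q$, again \eqref{eq:trace}.
\end{itemize}

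The main obstacle is not these computations but ensuring that each floor quotient lands in the right integer range. This is where the bound $J\le n$ (columns $<k$) and the branch condition $r+J\ge n$ are used, and they are exactly \eqref{eq:gap} in disguise.
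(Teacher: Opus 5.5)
Your proposal is correct and is essentially the paper's proof. Your Step 2 is the paper's Step 1 (the inverse cell map, split on $s+J'<n$ versus $s+J'\ge n$), and your Step 3 is the paper's floor computation \eqref{eq:floors}. The only difference is that you parametrize from the order-$k$ cell, dividing $I_k$ by $k-1$, rather than from the order-$(k-1)$ cell, dividing $X_{k-1}$ by $k$; both give the same quotient $q$. Your induction from $\Tar^{(1)}$ showing that each order is a bijection onto $\Z^+$ spells out what the paper takes from the one-to-one matching of cells; for the injectivity half, note that your Step 3 residue computation is exactly the check that your explicit inverse undoes $\phi$.
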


\begin{proof}
Suppose the traced value occupies the cell $(x,j)$ of the order-$n$ array, so
that $X:=X_{k-1}=x$ and $Y:=Y_{k-1}=x+j+1$, with $0\le j<n$. Write $k=n+1$ and let
\begin{equation}\label{eq:div}
  x=q(n+1)+s,\qquad q=\floor{\tfrac{x}{n+1}},\quad 0\le s\le n,
\end{equation}
be the division of $x$ by $k=n+1$.

\emph{Step 1 (the image cell in the order-$(n+1)$ array).}
Recurrence \eqref{eq:recursion} equates each order-$(n+1)$ entry with one order-$n$
entry, and so matches the cells of the two arrays one-to-one; the traced value
passes to the order-$(n+1)$ cell whose right-hand side in \eqref{eq:recursion} is
$(x,j)$. There are two cases.
\begin{itemize}
\item If $(x,j)$ is a first-line right-hand side $(q(n+1)+r,\,j)$ with
      $0\le r<n$, then \eqref{eq:div} forces $r=s$ (hence $s<n$); the column is
      unchanged, and the condition $j+r<n$ becomes $s+j<n$. The image is the
      left-hand side $(qn+s,\,j)$.
\item If $(x,j)$ is a second-line right-hand side $(q(n+1)+r+1,\,d-1)$, where $d$
      denotes the order-$(n+1)$ column, then $j=d-1$ (so $d=j+1$) and $r+1=s$
      (hence $r=s-1$ and $s\ge1$); the condition $d+r\ge n$ becomes
      $(j+1)+(s-1)=s+j\ge n$. The image is the left-hand side
      $(qn+r,\,d)=(qn+s-1,\,j+1)$.
\end{itemize}
The alternatives $s+j<n$ and $s+j\ge n$ are exhaustive and mutually exclusive, so
exactly one case applies, and
\begin{equation}\label{eq:forward}
  (x,j)\longmapsto
  \begin{cases}
    (qn+s,\ j), & s+j<n,\\
    (qn+s-1,\ j+1), & s+j\ge n .
  \end{cases}
\end{equation}
When $s+j\ge n$, $j\le n-1$ forces $s\ge n-j\ge1$, so the new row index
$qn+s-1\ge0$.

\emph{Step 2 (the two floors).}
By \eqref{eq:div}, $X=x=qk+s$ with $0\le s\le n=k-1<k$, so $\floor{X/k}=q$. Next
$Y=x+j+1=qk+(s+j+1)$, and since $0\le s\le k-1$ and $0\le j\le n-1=k-2$ we have
$1\le s+j+1\le 2k-2<2k$. Hence $\floor{Y/k}=q+\floor{(s+j+1)/k}$, where the last
floor is $0$ when $s+j<n$ (then $s+j+1\le k-1$) and $1$ when $s+j\ge n$ (then
$k\le s+j+1\le 2k-2$). In summary,
\begin{equation}\label{eq:floors}
  \floor{X/k}=q\ \text{always},\qquad
  \floor{Y/k}=\begin{cases} q, & s+j<n,\\[2pt] q+1, & s+j\ge n. \end{cases}
\end{equation}

\emph{Step 3 (matching \eqref{eq:trace}).}
Write the image \eqref{eq:forward} as $(x',j')$ and set $X_k=x'$,
$Y_k=x'+j'+1$; then $(X_k,Y_k)$ is the value's coordinate pair at order $n+1$.
Using $n=k-1$, i.e.\ $qn=qk-q$:
\begin{itemize}
\item if $s+j<n$, then $x'=qn+s=(qk+s)-q=X-q$ and
      $x'+j'+1=qn+s+j+1=(qk+s+j+1)-q=Y-q$; by \eqref{eq:floors},
      $\floor{Y/k}=\floor{X/k}=q$, so $X_k=X-\floor{Y/k}$ and $Y_k=Y-\floor{X/k}$;
\item if $s+j\ge n$, then $x'=qn+s-1=(qk+s)-(q+1)=X-(q+1)$ and
      $x'+j'+1=(qn+s-1)+(j+1)+1=qn+s+j+1=Y-q$; by \eqref{eq:floors},
      $\floor{Y/k}=q+1$ and $\floor{X/k}=q$, so again $X_k=X-\floor{Y/k}$ and
      $Y_k=Y-\floor{X/k}$.
\end{itemize}
Both cases give \eqref{eq:trace}.
\end{proof}

Thus Beluhov's problem is equivalent to the analysis of the coupled floor
recurrence \eqref{eq:trace} with boundary data \eqref{eq:XYbc}. We extend the
trajectory to all $k\ge N$ by its stationary value $(X_k,Y_k)=(i,N)$ (the array has
stabilised); then $a_k=\floor{Y_{k-1}/k}$ and $b_k=\floor{X_{k-1}/k}$ vanish for
$k>N$, since $i,N<k$, so every sum over $k$ below is finite.

\begin{remark}[The symmetry]\label{rem:symmetry}
The coordinates \eqref{eq:XY} were chosen for one reason, now visible in
\eqref{eq:trace}. In the original row and column variables the value's motion
\eqref{eq:forward} splits into two cases and treats $i$ and $j$ asymmetrically;
in $(X,Y)$ it collapses to a single \emph{symmetric} rule, each of $X,Y$
decreasing by the floor of the \emph{other} over $k$:
\[
  X_k=X_{k-1}-\floor{\tfrac{Y_{k-1}}{k}},\qquad
  Y_k=Y_{k-1}-\floor{\tfrac{X_{k-1}}{k}}.
\]
This $X\leftrightarrow Y$ symmetry is the structural heart of the paper. It is
what produces the twin widths $R_m,S_m$ and their paired recurrences
(\S\ref{sec:filter}), while the symmetric sum $X_k+Y_k$ telescopes to the
two-sided estimate (\S\ref{sec:mass}); and it is a coupled subtractive,
Euclidean-type
recurrence---a shape in which the problem becomes analysable, as none is visible
in the $(i,j)$ form. The one asymmetry that remains, the initial gap
$Y_1-X_1=1$, is exactly what the exponent $j$ (a column index) will track through
$Y_k-X_k=J_k+1$.
\end{remark}

\begin{example}\label{ex:trace}
For $M=100$, iterating \eqref{eq:trace} from $(X_1,Y_1)=(99,100)$ gives
\[
 (99,100)\to(49,51)\to(32,35)\to(24,27)\to(19,23)\to\cdots\to(6,15)\to(5,15),
\]
halting at $k=15=N$ (where $Y_{15}=15=k$); hence $M=\Tar_{5,9}$, and indeed
$\Tar_{5,9}=100$. This is the path of Figure~\ref{fig:trace}. Read as an
algorithm---iterate until $Y=k$, then output $(i,j)=(X,\,k-X-1)$---it is
Proposition~\ref{prop:locate}.
\end{example}

\begin{proof}[Proof of Proposition~\ref{prop:locate}]
Run \eqref{eq:trace} forwards from $(X_1,Y_1)=(M-1,M)$; by \eqref{eq:XYbc} the
trajectory reaches $(X_N,Y_N)=(i,N)$ at $k=N$. At each step the quantity $Y_k-k$
decreases by $\floor{X_{k-1}/k}+1\ge1$, so it is strictly decreasing; it equals
$Y_1-1=M-1$ at $k=1$ and $Y_N-N=0$ at $k=N$, and being integer-valued and
strictly decreasing it is positive for every $k<N$. Hence $k=N$ is the first
index with $Y_k=k$, and then $i=X_N$ and $j=N-X_N-1$. The while loop performs
$N-1$ updates before halting at $k=N$. By \eqref{eq:massN}, $2M-1\le N(i+N)<2N^2$, so
$M\le N^2$ and $N\ge\sqrt{M}$. For the matching $N=O(\sqrt M)$ we need only a quadratic
\emph{lower} bound on $M$, and \S\ref{sec:filter}--\S\ref{sec:wallis} supply an
elementary one: by Lemma~\ref{lem:track}, $R_3>\bar R_3-1=(j+1)c_2+iw_3-1\ge\tfrac38N-1$
(as $c_2=\tfrac38$, $w_3=\tfrac8{15}$), so $R_3\ge7=2\cdot3+1$ once $N\ge22$; the
single-level reading \eqref{eq:crossread} at $m=3$ then gives
$M\ge R_3^2\ge(\tfrac38N-1)^2$, i.e.\ $N=O(\sqrt M)$ (the finitely many $N<22$ are
trivial). This uses only \S\ref{sec:filter}--\S\ref{sec:wallis}, not
Theorem~\ref{thm:main}. Hence the algorithm uses $\Theta(\sqrt{M})$ arithmetic operations.
\end{proof}

\section{The drops and a two-sided estimate}\label{sec:mass}

Everything downstream is built from the recurrence's two \emph{drops}---the
amounts by which the coordinates fall at each step. For $k\ge2$ set
\begin{equation}\label{eq:ab}
  a_k:=\floor{\frac{Y_{k-1}}{k}},\qquad b_k:=\floor{\frac{X_{k-1}}{k}},
\end{equation}
so that $X_k=X_{k-1}-a_k$ and $Y_k=Y_{k-1}-b_k$. This short section establishes the
two facts we need about them: they are monotone, and a two-sided estimate already
caps $M$ at order $N^2$.

Two facts carry the section. First, the drops \emph{shrink}: $a_k,b_k$ are
nonincreasing and stay within $1$ of each other (Lemma~\ref{lem:ab}), so the
coordinates fall like $\sim M/k$ in ever-shorter steps. Second, the combination
$k(X_k+Y_k)$ is a near-invariant, pinned near $2M-1$ (Lemma~\ref{lem:mass}), which
caps $M$ at $O(N^2)$.

\begin{lemma}[Monotonicity]\label{lem:ab}
The sequences $(a_k)$ and $(b_k)$ are nonincreasing, and
\begin{equation}\label{eq:abdiff}
  a_k-b_k\in\{0,1\}.
\end{equation}
\end{lemma}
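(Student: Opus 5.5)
Throughout, I work with the data $a_k=\floor{Y_{k-1}/k}$ and $b_k=\floor{X_{k-1}/k}$ for $k\ge2$, together with the gap bound \eqref{eq:gap}, $1\le Y_{k-1}-X_{k-1}\le k-1$. The relation \eqref{eq:abdiff} comes first and is immediate. Since $X_{k-1}<Y_{k-1}$, we have $b_k\le a_k$. Moreover $Y_{k-1}\le X_{k-1}+k-1<X_{k-1}+k$, so $Y_{k-1}/k<X_{k-1}/k+1$, and taking floors gives $a_k\le b_k+1$. Hence $a_k-b_k\in\{0,1\}$. For $k>N$ both drops vanish (as noted after Lemma~\ref{lem:trace}), so the claim is trivial there.

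For monotonicity I compare step $k$ with step $k+1$, where $2\le k\le N-1$. I want $\floor{Y_k/(k+1)}\le\floor{Y_{k-1}/k}$. It suffices to show $Y_k/(k+1)<a_k+1$, i.e.\ $Y_k<(a_k+1)(k+1)$. Write $Y_{k-1}=a_kk+t$ with $0\le t\le k-1$. Then
\[
Y_k=Y_{k-1}-b_k\le a_kk+t-(a_k-1)=a_k(k-1)+t+1,
\]
using $b_k\ge a_k-1$ from the first step. Since $t+1\le k$, this gives $Y_k\le a_k(k-1)+k<(a_k+1)(k+1)$ whenever $a_k\ge0$, which always holds. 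So $a_{k+1}\le a_k$.

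For $b$ the same argument works. Write $X_{k-1}=b_kk+u$ with $0\le u\le k-1$. Then
\[
X_k=X_{k-1}-a_k\le b_kk+u-b_k=b_k(k-1)+u<(b_k+1)(k+1),
\]
using $a_k\ge b_k$. Hence $b_{k+1}\le b_k$.

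The only care needed is at the boundary. The gap bound is used at index $k-1$, where it reads $Y_{k-1}-X_{k-1}\le k-1$, and this holds for all $k-1\ge1$. At the transition to the stationary range, after $k=N$, all drops are zero, so monotonicity holds across that boundary automatically. I do not foresee a real obstacle. The one point that needs checking is that the coupling $a_k-b_k\in\{0,1\}$ is established before it is used in the monotonicity step. Since that coupling relies only on \eqref{eq:gap}, the argument is not circular.
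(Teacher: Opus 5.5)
Your proof is correct and matches the paper's on the coupling: $a_k-b_k\in\{0,1\}$ follows from the gap bound \eqref{eq:gap} at index $k-1$, exactly as you argue. For monotonicity the paper uses only $Y_k\le Y_{k-1}$, $X_k\le X_{k-1}$ and the growing denominator, so that $Y_k/(k+1)\le Y_{k-1}/k$ and the floors are monotone; your detour through $b_k\ge a_k-1$ is valid but unnecessary, since $b_k\ge0$ (and likewise $a_k\ge0$) already suffices.
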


\begin{proof}
Since $X_k\le X_{k-1}$, $Y_k\le Y_{k-1}$ and the denominator increases, $(a_k)$
and $(b_k)$ are nonincreasing. By \eqref{eq:gap} at $k-1$,
$0\le Y_{k-1}-X_{k-1}\le k-1$, so $\floor{Y_{k-1}/k}$ and $\floor{X_{k-1}/k}$
differ by $0$ or $1$, which is \eqref{eq:abdiff}.
\end{proof}

So each drop is no larger than the last, and the coordinates fall by less and less;
the estimate below turns ``$\sim M/k$'' into a precise band.

\begin{lemma}[Two-sided estimate]\label{lem:mass}
For every $k$ with $1\le k\le N$,
\begin{equation}\label{eq:mass}
  2M-1\le k(X_k+Y_k)\le 2M-1+k(k-1).
\end{equation}
In particular, at $k=N$,
\begin{equation}\label{eq:massN}
  N(i+1)\le 2M-1\le N(i+N).
\end{equation}
\end{lemma}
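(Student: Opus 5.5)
The plan is to exploit the $X\leftrightarrow Y$ symmetry of Lemma~\ref{lem:trace} by tracking the single quantity
\[
T_k:=k\,(X_k+Y_k),\qquad 1\le k\le N,
\]
and to show that each step of the trace recurrence changes it by a nonnegative amount of size at most $2(k-1)$. At $k=1$, \eqref{eq:XYbc} gives $X_1+Y_1=(M-1)+M$, so $T_1=2M-1$. This is the base of both inequalities in \eqref{eq:mass}.

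For the step $k-1\to k$ with $2\le k\le N$, divide with remainder:
\[
Y_{k-1}=k\,a_k+u_k,\qquad X_{k-1}=k\,b_k+v_k,\qquad 0\le u_k,v_k\le k-1,
\]
which is legitimate by the definitions \eqref{eq:ab} of $a_k$ and $b_k$. Since $X_k+Y_k=X_{k-1}+Y_{k-1}-a_k-b_k$ by \eqref{eq:trace}, we get
\[
T_k-T_{k-1}=k\,(X_{k-1}+Y_{k-1})-k\,(a_k+b_k)-(k-1)(X_{k-1}+Y_{k-1})=(X_{k-1}+Y_{k-1})-k\,(a_k+b_k)=u_k+v_k .
\]
Hence $0\le T_k-T_{k-1}\le 2(k-1)$. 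Telescoping from $1$ to $k$ gives
\[
2M-1\le T_k\le 2M-1+\sum_{l=2}^{k}2(l-1)=2M-1+k(k-1),
\]
which is \eqref{eq:mass}.

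For \eqref{eq:massN}, substitute the terminal data $(X_N,Y_N)=(i,N)$ from \eqref{eq:XYbc}, so that $T_N=N(i+N)$. The left inequality of \eqref{eq:mass} then reads $2M-1\le N(i+N)$. The right inequality reads $2M-1\ge N(i+N)-N(N-1)=N(i+1)$.

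No step is a genuine obstacle. The only point needing care is the exact telescoping identity: the floors must be absorbed into the remainders $u_k,v_k$, so that the increment of $T_k$ comes out exactly equal to $u_k+v_k$ and not merely bounded by it.
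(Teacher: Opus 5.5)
Your proposal is correct and is essentially the paper's proof: both show that $k(X_k+Y_k)$ increases from order $k-1$ to $k$ by exactly the sum of the two division remainders, which lies in $[0,2(k-1)]$, then telescope from $T_1=2M-1$ and substitute $(X_N,Y_N)=(i,N)$ to obtain \eqref{eq:massN}.
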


\begin{proof}
Consider the quantity $k(X_k+Y_k)$. Writing the Euclidean divisions
$Y_{k-1}=k\,a_k+r^Y_k$ and $X_{k-1}=k\,b_k+r^X_k$ with remainders
$r^X_k,r^Y_k\in[0,k-1]$, and using $X_k=X_{k-1}-a_k$, $Y_k=Y_{k-1}-b_k$,
\[
  k(X_k+Y_k)=k(X_{k-1}+Y_{k-1})-k(a_k+b_k)
  =(k-1)(X_{k-1}+Y_{k-1})+r^X_k+r^Y_k .
\]
So from order $k-1$ to $k$ it increases by exactly $r^X_k+r^Y_k\in[0,2(k-1)]$.
Starting from $1\cdot(X_1+Y_1)=2M-1$ and accumulating these increments,
\[
  k(X_k+Y_k)=2M-1+\sum_{\ell=2}^{k}(r^X_\ell+r^Y_\ell),\qquad
  0\le\sum_{\ell=2}^{k}(r^X_\ell+r^Y_\ell)\le\sum_{\ell=2}^{k}2(\ell-1)=k(k-1),
\]
which is \eqref{eq:mass}. At $k=N$, $X_N=i$ and $Y_N=N$ give \eqref{eq:massN}.
\end{proof}

In particular $N\to\infty$ forces $M\to\infty$, and \eqref{eq:massN} gives the upper
scale $M=O(N^2)$ (its lower bound $2M-1\ge N(i+1)$ is only linear when $i$ is small,
so a matching $M=\Omega(N^2)$ is not claimed here---it will follow from
Theorem~\ref{thm:main}). The rest of the paper computes the constant.

\section{Conjugate widths and their recurrence}\label{sec:filter}

We now \emph{reverse} the direction of \S\ref{sec:mass}. There we were handed a
value $M$ and ran the recurrence forward to find its cell (Proposition~\ref{prop:locate});
Beluhov's question is the opposite---given the cell $(i,j)$, how large is
$M=\Tar_{i,j}$? So we start where the position is known, at the endpoint
$(X_N,Y_N)=(i,N)$, and telescope \eqref{eq:trace} \emph{down} from it:
\begin{equation}\label{eq:tails}
  X_{k-1}=i+\sum_{\ell\ge k}a_\ell,\qquad Y_{k-1}=N+\sum_{\ell\ge k}b_\ell .
\end{equation}
Carried all the way to order~$1$, where $X_1=M-1$ and $Y_1=M$, this telescope
\emph{is} the answer:
\begin{equation}\label{eq:Msum}
  M=N+\sum_{k\ge2}b_k=i+1+\sum_{k\ge2}a_k .
\end{equation}
So $M$ is nothing but the total of the drops---the \emph{area} under the
drop-staircase $a_k$ of \S\ref{sec:mass}. The whole task is to evaluate this area.

\subsection*{The drops obey a recurrence---and the floor is the difficulty}

Substituting the tail sums \eqref{eq:tails} into \eqref{eq:ab} gives the drops a
recurrence of their own,
\begin{equation}\label{eq:abrec}
  a_k=\Big\lfloor\tfrac1k\big(N+\textstyle\sum_{\ell\ge k}b_\ell\big)\Big\rfloor,
  \qquad
  b_k=\Big\lfloor\tfrac1k\big(i+\textstyle\sum_{\ell\ge k}a_\ell\big)\Big\rfloor,
\end{equation}
which, run downward from $(i,N)$, determines every $a_k,b_k$ and hence $M$. The
trouble is the floor. Drop it---replace $\floor{x}$ by $x$---and the system collapses
to a \emph{linear} recurrence: with $\Sigma_k=X_k+Y_k$, $\Delta_k=Y_k-X_k$ one gets
$\Sigma_k=\tfrac{k-1}{k}\Sigma_{k-1}$, $\Delta_k=\tfrac{k+1}{k}\Delta_{k-1}$, hence
$k(X_k+Y_k)=2M-1$ and $Y_k-X_k=\tfrac{k+1}2$, so $M$ falls out at the endpoint. But
the answer is \emph{wrong}: this floorless model is forced onto the diagonal
$i/N\to\tfrac12$, where it predicts $M/N^2\to\tfrac34$, against the true value
$\mathcal C(\tfrac12)=(2+\pi)^2/(16\pi)=0.526\ldots$. The discarded remainders
$r^X_k,r^Y_k\in[0,k)$ of \eqref{eq:mass}, accumulated over $\Theta(N)$ steps, carry
the whole dependence on $\alpha=i/N$ and shift the constant; the floor cannot simply
be discarded.

\subsection*{Change of variable: sum the area by rows}

The cure is not to smooth the floor away but to change variables so it can be
accounted for \emph{exactly}. Sum the same area \eqref{eq:Msum} \emph{by rows
instead of by columns}. The drops are nonincreasing staircases (Lemma~\ref{lem:ab}),
descending from $a_2\approx M/2$ to $0$, and for such a staircase the tail sum
equals a sum of \emph{level-widths}: each unit of height belongs to one level, so
totalling the heights is the same as totalling how far each level reaches
(Figure~\ref{fig:conjugate}). For $m\ge1$ define the \emph{widths} as counts,
\begin{equation}\label{eq:width}
  R_m:=1+\sum_{k\ge2}[a_k\ge m],\qquad S_m:=1+\sum_{k\ge2}[b_k\ge m]
\end{equation}
(the Iverson bracket $[P]$ is $1$ if $P$ holds, $0$ otherwise; since $a_k=b_k=0$ for
$k>N$, both sums are finite). By monotonicity, when the level is reached ($a_2\ge m$)
this is $R_m=\max\{k:a_k\ge m\}$, the last order at which $a_k$ reaches level $m$---the
order where the staircase crosses below height $m$ ($a_{R_m}\ge m>a_{R_m+1}$); when it
is not ($a_2<m$), the sum is empty and $R_m=1$. Because $a_k$ is monotone, this single
index is also the \emph{count} of orders reaching the level: $R_m$ is at once a
\emph{width} (how many $k$ have $a_k\ge m$) and an \emph{index} (the last such $k$).
The width reading uses the first face; the recurrence below is read off the drop
recurrence \eqref{eq:abrec} \emph{at that index}, and this two-facedness is exactly
what closes it. Writing
$a_k=\sum_{m\ge1}[a_k\ge m]$ and swapping the sums gives the layer-cake
$\sum_k a_k=\sum_m\sum_k[a_k\ge m]=\sum_m(R_m-1)$, so \eqref{eq:Msum} becomes the
\emph{width reading}
\begin{equation}\label{eq:MsumRS}
  M=N+\sum_{m\ge1}(S_m-1)=i+1+\sum_{m\ge1}(R_m-1)
\end{equation}
---the same staircase summed along its rows (Figure~\ref{fig:conjugate}).

And now the floor is harmless. At $k=R_m$ the non-local tail sum in
\eqref{eq:abrec} \emph{truncates}: because the widths decrease, only the low levels
reach past the cut (Figure~\ref{fig:truncate}), so what was the floor of a whole tail
collapses to a finite, lower-triangular sum. This turns \eqref{eq:abrec} into
the \emph{exact} width recurrence
\begin{equation}\label{eq:renewintro}
  (2m-1)R_m=N+\sum_{r=1}^{m-1}S_r+E_m,\qquad
  2mS_m=i+\sum_{r=1}^{m}R_r+F_m,
\end{equation}
with bounded corrections $1-m\le E_m\le m-1$, $1-m\le F_m\le m$
(Lemma~\ref{lem:filter}). Two displays now carry the section: the formula
\eqref{eq:MsumRS} giving $M$ from the widths, and the recurrence
\eqref{eq:renewintro}---read off \eqref{eq:abrec} at $k=R_m$---giving the
widths themselves. Crucially $E_m,F_m$ are the floor's remainders, now collected
\emph{one per level} (size $O(m)$) rather than one per step: being bounded, they
vanish under the rescaling of \S\ref{sec:wallis}, so this time the smoothing is
legitimate.

\begin{lemma}[Interlacing]\label{lem:interlace}
The conjugate widths satisfy
\begin{equation}\label{eq:interlace}
  R_{m+1}\le S_m\le R_m,\qquad R_1=N.
\end{equation}
\end{lemma}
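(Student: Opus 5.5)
The three claims all come from Lemma~\ref{lem:ab} and the definition \eqref{eq:width}. Throughout I will use that the Iverson counts are monotone in the sequence, so pointwise inequalities between drops pass directly to the widths.

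\emph{Step 1: $S_m\le R_m$.} By \eqref{eq:abdiff} we have $b_k\le a_k$ for every $k\ge2$. Hence $[b_k\ge m]\le[a_k\ge m]$ termwise, and summing over $k$ in \eqref{eq:width} gives $S_m\le R_m$.

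\emph{Step 2: $R_{m+1}\le S_m$.} By \eqref{eq:abdiff} again, $a_k\le b_k+1$. So $a_k\ge m+1$ forces $b_k\ge m$, that is, $[a_k\ge m+1]\le[b_k\ge m]$. Summing over $k$ gives $R_{m+1}\le S_m$. Monotonicity of the drops is not even needed for these two inequalities; it only matters for reading $R_m$ as the index $\max\{k:a_k\ge m\}$, which we do not use here.

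\emph{Step 3: $R_1=N$.} This is the one step with content, because it needs the endpoint behaviour of the trajectory. We have $R_1=1+\#\{k\ge2:a_k\ge1\}$, and it suffices to show that $a_k\ge1$ exactly for $2\le k\le N$.

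For $k>N$ the drops vanish by the stationary extension, as already noted after Lemma~\ref{lem:trace}.

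For $2\le k\le N$ we have $a_k=\floor{Y_{k-1}/k}$, so we need $Y_{k-1}\ge k$. The proof of Proposition~\ref{prop:locate} shows that $Y_\ell-\ell$ is strictly decreasing and equals $0$ at $\ell=N$. Therefore $Y_{k-1}-(k-1)\ge1$ for $k-1<N$, which is $Y_{k-1}\ge k$, so $a_k\ge1$.

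The count is then $N-1$, and $R_1=N$. The degenerate case $N=1$ (i.e.\ $i=j=0$, $M=1$) gives an empty sum and $R_1=1=N$, consistent with the formula.

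The main point of care is Step 3: we must invoke the strict decrease of $Y_k-k$ rather than just $Y_k\ge X_k+1$, since the latter would only give $Y_{k-1}\ge1$.
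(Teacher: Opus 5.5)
Your proof is correct and follows the paper's argument. The same pointwise comparison $b_k\le a_k\le b_k+1$ from \eqref{eq:abdiff} gives the interlacing; you pass it through the counts in \eqref{eq:width} rather than taking maxima, which sidesteps the empty-level convention. Likewise, $R_1=N$ comes from showing $a_k\ge1$ exactly for $2\le k\le N$. The strict decrease of $Y_\ell-\ell$ is more than you need there: since $Y$ is nonincreasing, $Y_{k-1}\ge Y_N=N\ge k$ already, which is essentially how the paper gets $a_N\ge1$ before invoking monotonicity of $(a_k)$.
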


\begin{proof}
If $a_k\ge m+1$ then $b_k\ge a_k-1\ge m$ by \eqref{eq:abdiff}; hence
$\{k:a_k\ge m+1\}\subseteq\{k:b_k\ge m\}\subseteq\{k:a_k\ge m\}$, and taking maxima
gives $R_{m+1}\le S_m\le R_m$. Finally $Y_N=N$ gives
$a_{N+1}=\floor{Y_N/(N+1)}=0$, while $Y_{N-1}=Y_N+b_N\ge N$ gives $a_N\ge1$; hence
$R_1=N$.
\end{proof}

\begin{figure}[t]
\centering
\includegraphics[width=\textwidth]{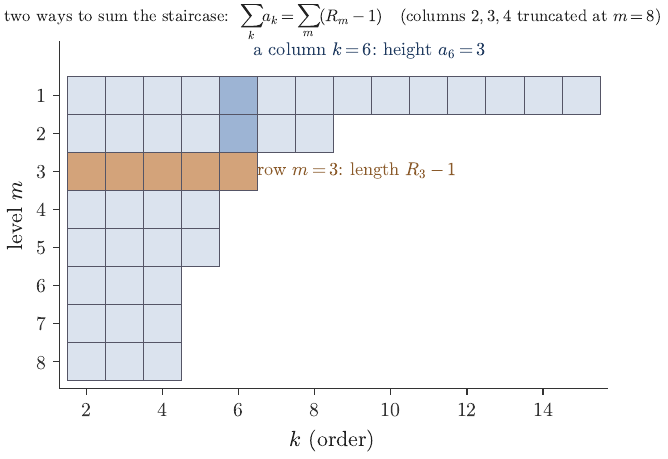}
\caption{Summing a staircase two ways, for $M=100$. The column of height $a_k$ over
order $k$ has its $m$-th row reaching to $R_m$; counting cells down the columns gives
$\sum_k a_k$, along the rows $\sum_m(R_m-1)$, and the two agree---the transpose behind
the two readings $M=i+1+\sum_k a_k=i+1+\sum_m(R_m-1)$ of \eqref{eq:Msum}. (Columns
$k=2,3,4$, heights $50,17,8$, truncated.)}
\label{fig:conjugate}
\end{figure}

\begin{example}\label{ex:filters}
For $M=100$ (Figure~\ref{fig:conjugate}) the widths are
\[
  \begin{array}{c|cccccc}
    m   & 1 & 2 & 3 & 4 & 5 & 6\\\hline
    R_m & 15 & 8 & 6 & 5 & 5 & 4\\
    S_m & 10 & 7 & 6 & 5 & 4 & 4
  \end{array}
\]
displaying the interlacing $R_{m+1}\le S_m\le R_m$ of Lemma~\ref{lem:interlace}
(e.g.\ $R_2=8\le S_1=10\le R_1=15$).
\end{example}

\begin{lemma}\label{lem:diverge}
For each fixed $m$, $R_m\to\infty$ and $S_m\to\infty$ as $M\to\infty$.
\end{lemma}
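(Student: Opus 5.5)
We need to show: for fixed $m$, $R_m\to\infty$ and $S_m\to\infty$ as $M\to\infty$. By the interlacing $R_{m+1}\le S_m\le R_m$ of Lemma~\ref{lem:interlace}, it suffices to prove $R_m\to\infty$ for every fixed $m$; then $S_m\ge R_{m+1}\to\infty$ as well. Equivalently, fixing $m$ and $K$, we want $a_K\ge m$ once $M$ is large enough, because $R_m=\max\{k:a_k\ge m\}$ (the drops being nonincreasing) and then $R_m\ge K$.

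The plan is to show that for each fixed $k$ the coordinates $X_k,Y_k$ tend to infinity with $M$, by induction on $k$, using a crude lower bound coming directly from the trace recurrence \eqref{eq:trace}. At $k=1$ we have $(X_1,Y_1)=(M-1,M)$. For the inductive step, observe that $a_k=\floor{Y_{k-1}/k}\le Y_{k-1}/k$, so
\[
  X_k=X_{k-1}-a_k\ \ge\ X_{k-1}-\frac{Y_{k-1}}{k}\ \ge\ X_{k-1}-\frac{X_{k-1}+k-1}{k},
\]
using $Y_{k-1}\le X_{k-1}+k-1$ from \eqref{eq:gap}. Hence $X_k\ge\frac{k-1}{k}X_{k-1}-1$. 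Also $Y_k\ge X_k$. Iterating gives $X_k\ge (M-1)/k-O(k)$, which tends to infinity for fixed $k$. One must be careful that this applies only for $k\le N$. However, since $N\to\infty$ as $M\to\infty$ (by \eqref{eq:massN}, $2M-1\le N(i+N)\le 2N^2$), for fixed $K$ and large $M$ every $k\le K+1$ lies in the range of Lemma~\ref{lem:trace}.

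From this, $a_K=\floor{Y_{K-1}/K}\ge\floor{X_{K-1}/K}\to\infty$, so $a_K\ge m$ for $M$ large. Monotonicity (Lemma~\ref{lem:ab}) then gives $a_k\ge m$ for all $2\le k\le K$, and hence $R_m\ge K$. An even simpler route is available: the two-sided estimate \eqref{eq:mass} gives $k(X_k+Y_k)\ge 2M-1$ directly, so $X_{k-1}+Y_{k-1}\ge(2M-1)/(k-1)$. Combined with $Y_{k-1}\ge X_{k-1}$, this yields $Y_{k-1}\ge(2M-1)/(2(k-1))$ and therefore $a_k\ge\floor{(2M-1)/(2k(k-1))}$. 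I would use this version, since it avoids the induction altogether. It gives $a_K\ge m$ as soon as $2M-1\ge 2mK(K-1)+2K(K-1)$ and $K\le N$.

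The only delicate point is to ensure that $K\le N$, so that the index $K$ is still in the range where $a_K$ is governed by the recurrence rather than being identically zero. This follows from $N\ge\sqrt M$, which is a consequence of \eqref{eq:massN}. With that in hand, the argument is a routine combination of Lemma~\ref{lem:mass}, Lemma~\ref{lem:ab}, and Lemma~\ref{lem:interlace}.
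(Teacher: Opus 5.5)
Your proposal is correct and is essentially the paper's argument read contrapositively: the paper evaluates the lower bound of \eqref{eq:mass} at $k=R_m$, where $a_{k+1}<m$ and $X_k<Y_k$ give $2M-1<2mR_m(R_m+1)$. You instead evaluate it at a fixed index $K-1$ to force $a_K\ge m$, which gives the same bound $R_m\gtrsim\sqrt{M/m}$; the paper's choice $k=R_m\le N$ just avoids your preliminary induction and your separate check that $K\le N$.
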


\begin{proof}
Put $k=R_m$. Since $a_{k+1}<m$ we have $Y_k<m(k+1)$, and $X_k<Y_k$, so by the
lower bound in \eqref{eq:mass}, $2M-1\le k(X_k+Y_k)<2mk(k+1)$. As $M\to\infty$
this forces $R_m\to\infty$; and $S_m\ge R_{m+1}$ by \eqref{eq:interlace}.
\end{proof}

So far the index $k=R_m$ gives only the \emph{inequality} $a_{R_m}\ge m>a_{R_m+1}$:
the staircase has passed below level $m$, but it may have \emph{skipped}---overshooting
the level ($a_{R_m}>m$) or falling through several levels in one step
($a_{R_m+1}\ll m-1$). Near the top, where the drops are large ($a_2\approx M/2$) and
coarse, skipping is the rule; far out, where the drops are small and change slowly, the
descent is forced to be \emph{clean}---landing exactly on level $m$ and stepping down by
one. The threshold is the width being long enough, $R_m\ge2m+1$.

\begin{lemma}[No skipped levels]\label{lem:noskip}
For every $m\ge1$, provided the width is long enough:
\begin{equation}\label{eq:noskip}
  R_m\ge 2m+1\ \Rightarrow\ a_{R_m}=m,\ a_{R_m+1}=m-1;\qquad
  S_m\ge 2m+2\ \Rightarrow\ b_{S_m}=m,\ b_{S_m+1}=m-1.
\end{equation}
In particular, for each \emph{fixed} $m$ both hypotheses hold once $N$ is large
(Lemma~\ref{lem:diverge}), so \eqref{eq:noskip} then holds unconditionally.
\end{lemma}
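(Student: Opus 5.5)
The plan is a direct squeeze argument at the crossing index, using only the trace recurrence \eqref{eq:trace}, the definition \eqref{eq:ab} of the drops, and the comparison \eqref{eq:abdiff} between them. Put $k=R_m$. By definition of $R_m$ we already have $a_k\ge m>a_{k+1}$; note that $R_m\ge 2m+1\ge 3$ means the level is reached and $k\ge2$. So two things remain. First, rule out overshoot by proving $a_k\le m$. Second, rule out a multi-level fall by proving $a_{k+1}\ge m-1$. The key observation is that a single step changes $Y$ only by $b_k$, and $b_k$ is at most about $Y_{k-1}/k$. Hence the normalised height $Y/k$ can only move by $O(m/k)$ in one step, which is less than one unit once $k$ exceeds roughly $2m$.

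\emph{Upper bound.} From $a_{k+1}\le m-1$ we get $Y_k<m(k+1)$. By \eqref{eq:abdiff} and the definition of $a_k$, $b_k\le a_k\le Y_{k-1}/k$. Combining these with $Y_{k-1}=Y_k+b_k$ gives
\[
Y_{k-1}<m(k+1)+\frac{Y_{k-1}}{k}, \qquad\text{hence}\qquad \frac{Y_{k-1}}{k}<\frac{m(k+1)}{k-1}=m+\frac{2m}{k-1}.
\]
When $k\ge 2m+1$ the right-hand side is at most $m+1$, so $a_k=\floor{Y_{k-1}/k}\le m$, and therefore $a_k=m$.

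\emph{Lower bound.} Now $a_k=m$ gives $Y_{k-1}\ge mk$ and $b_k\le a_k=m$. Hence
\[
Y_k=Y_{k-1}-b_k\ge m(k-1).
\]
The inequality $m(k-1)\ge(m-1)(k+1)$ is equivalent to $k\ge 2m-1$, which holds. So $Y_k\ge (m-1)(k+1)$ and $a_{k+1}\ge m-1$, hence $a_{k+1}=m-1$.

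\emph{The $S$-statement.} The argument for $S_m$ is the mirror image, with $k=S_m$ and the roles of $X$ and $Y$ exchanged. The only change is that \eqref{eq:abdiff} now gives the weaker $a_k\le b_k+1\le X_{k-1}/k+1$. This extra $+1$ is what costs one more unit in the threshold. From $b_{k+1}\le m-1$ we get $X_k<m(k+1)$, so $X_{k-1}<m(k+1)+X_{k-1}/k+1$ and
\[
\frac{X_{k-1}}{k}<m+\frac{2m+1}{k-1}.
\]
This is at most $m+1$ precisely when $k\ge 2m+2$, so $b_k=m$. Then $X_k\ge mk-(m+1)$, and $mk-(m+1)\ge (m-1)(k+1)$ is equivalent to $k\ge 2m$, so $b_{k+1}=m-1$.

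The final sentence of the lemma follows from Lemma~\ref{lem:diverge}: for fixed $m$ both widths eventually exceed $2m+2$. The only step requiring care is the upper bound. There one must use the self-referential estimate $b_k\le Y_{k-1}/k$, respectively $a_k\le X_{k-1}/k+1$, to close the inequality, rather than the crude bound $b_k\le a_k$ with $a_k$ unknown. Everything else is elementary arithmetic.
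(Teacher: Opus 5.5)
Your proof is correct and follows essentially the paper's route: at the crossing index $k=R_m$ (resp.\ $k=S_m$) you squeeze $Y_k$ (resp.\ $X_k$) between $Y_k<m(k+1)$ and a lower bound coming from $b_k\le a_k$ (resp.\ $a_k\le b_k+1$), and you get the same thresholds $2m+1$, $2m-1$ (resp.\ $2m+2$, $2m$). The only difference is cosmetic: the paper rules out overshoot by contradiction, setting $A=a_k$ and using $Y_k\ge A(k-1)$, which is exactly the ``crude'' bound $b_k\le a_k$ that your closing remark says cannot close the argument. It does close it, because $A(k-1)$ is increasing in $A$, so that remark overstates the need for the self-referential estimate.
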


\begin{proof}
Let $k=R_m$ and $A=a_k\ge m$, while $a_{k+1}<m$. Since $b_k\le a_k=A$,
\begin{equation}\label{eq:Ylow}
  Y_k=Y_{k-1}-b_k\ge Ak-A=A(k-1),
\end{equation}
whereas $a_{k+1}<m$ gives
\begin{equation}\label{eq:Yhigh}
  Y_k<m(k+1).
\end{equation}
If $A\ge m+1$ then \eqref{eq:Ylow}--\eqref{eq:Yhigh} give
$(m+1)(k-1)<m(k+1)$, i.e.\ $k<2m+1$, contradicting the hypothesis $R_m=k\ge2m+1$; so
$A=m$. Now \eqref{eq:Ylow} reads $Y_k\ge m(k-1)$, and for $k\ge 2m-1$ we have
$m(k-1)\ge(m-1)(k+1)$; together with \eqref{eq:Yhigh}, $(m-1)(k+1)\le Y_k<m(k+1)$, so
$a_{k+1}=m-1$. This proves the first pair.

For the second, let $k=S_m$ and $B=b_k$. From \eqref{eq:abdiff}, $a_k\le B+1$, so
\begin{equation}\label{eq:Xlow}
  X_k=X_{k-1}-a_k\ge Bk-(B+1)=B(k-1)-1,
\end{equation}
while $b_{k+1}<m$ gives $X_k<m(k+1)$. If $B\ge m+1$ these force $k<2m+2$,
contradicting $S_m=k\ge2m+2$; hence $B=m$. For $k\ge 2m$ one has
$m(k-1)-1\ge(m-1)(k+1)$, and with $X_k<m(k+1)$ this yields $b_{k+1}=m-1$.
\end{proof}

Which levels are clean? Since $R_m\asymp N/\sqrt m$, the hypothesis $R_m\ge2m+1$ reads
$m\lesssim N^{2/3}$: for a given cell the \emph{low} levels $1\le m\lesssim N^{2/3}$ all
land cleanly, while the higher ones---where the staircase is still steep---may skip.
Read the other way, a \emph{fixed} level $m$ becomes clean once $N\gtrsim m^{3/2}$, and
as $N\to\infty$ this clean band swells past any fixed $m$ (Lemma~\ref{lem:diverge}).

We use cleanness in exactly one place, at the top of this band. It lets us read $M$ off
a \emph{single} width: at $k=R_m$ the clean landing gives $Y_k\asymp mR_m$, so the mass
estimate \eqref{eq:mass} yields $2M\asymp R_m(X_k+Y_k)\asymp2mR_m^2$. This single-level
reading---turning one width back into the value $M$---is the sole use of no-skipping in
the paper (the width recurrence needs none of it); \S\ref{sec:sqrt} reads $M$ at one
level $m\asymp N^{2/3}$ and balances this against the one-cell rounding of $R_m$.

\begin{lemma}[Single-level reading]\label{lem:crossread}
At any level $m$ where the no-skipping lemma holds, with $k=R_m$,
\begin{equation}\label{eq:crossread}
  (m-2)R_m^2\ \le\ M\ <\ m R_m(R_m+1)+\tfrac12,
  \qquad\text{that is}\quad M=mR_m^2+O(R_m^2+mR_m).
\end{equation}
\end{lemma}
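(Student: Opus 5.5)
The plan is to evaluate the two-sided estimate of Lemma~\ref{lem:mass} at the single order $k=R_m$, and to pin down $X_k+Y_k$ there using the clean crossing supplied by Lemma~\ref{lem:noskip}. Both inequalities in \eqref{eq:crossread} then become short algebraic consequences.

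\emph{Step 1: bracket $Y_k$.} Since the no-skipping lemma holds at level $m$, we have $a_{k+1}=m-1$. By definition \eqref{eq:ab}, $a_{k+1}=\floor{Y_k/(k+1)}$, so this is equivalent to
\[
  (m-1)(k+1)\le Y_k< m(k+1).
\]
We need $k\le N$ so that \eqref{eq:mass} applies at order $k$; this holds because $R_m\le R_1=N$ by Lemma~\ref{lem:interlace}.

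\emph{Step 2: bracket $X_k+Y_k$.} The gap bound \eqref{eq:gap} gives $Y_k-k\le X_k\le Y_k-1$. Hence
\[
  2Y_k-k\le X_k+Y_k\le 2Y_k-1 .
\]
For the upper side, $Y_k\le m(k+1)-1$ by integrality, so $X_k+Y_k\le 2m(k+1)-3$.

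\emph{Step 3: upper bound on $M$.} Combine the lower inequality $2M-1\le k(X_k+Y_k)$ of \eqref{eq:mass} with Step~2. This gives $2M-1\le k\,(2m(k+1)-3)$, so
\[
  M\le mk(k+1)-\tfrac{3k}{2}+\tfrac12< mR_m(R_m+1)+\tfrac12 .
\]

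\emph{Step 4: lower bound on $M$.} Use the other side of \eqref{eq:mass}, $k(X_k+Y_k)\le 2M-1+k(k-1)$. Together with $X_k+Y_k\ge 2Y_k-k\ge 2(m-1)(k+1)-k$ this gives
\[
  2M\ge 2(m-1)k(k+1)-k^2-k(k-1)+1=2(m-2)k^2+(2m-1)k+1\ge 2(m-2)k^2 ,
\]
using $m\ge1$ in the last step. Thus $M\ge (m-2)R_m^2$.

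\emph{Step 5: the big-$O$ form.} Both bounds differ from $mR_m^2$ by at most $2R_m^2+mR_m+\tfrac12$. This yields $M=mR_m^2+O(R_m^2+mR_m)$.

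The only delicate point is the lower bound in Step~4. The slack $k(k-1)$ in \eqref{eq:mass} and the loss of up to $k$ in the gap bound each cost about $k^2$. This is why the coefficient drops from $m$ to $m-2$ rather than to something like $m-1$. The point to check is that these losses are absorbed exactly by the $2(m-1)k$ term, which the computation above confirms. Note also that only the landing $a_{R_m+1}=m-1$ from Lemma~\ref{lem:noskip} is needed, not $a_{R_m}=m$.
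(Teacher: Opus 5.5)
Your proof is correct and is the paper's own argument: you bracket $Y_k$ through $a_{k+1}=m-1$, widen to $X_k+Y_k$ with the gap bound \eqref{eq:gap}, and apply both sides of \eqref{eq:mass} at $k=R_m$. Your two additions are harmless refinements the paper leaves implicit: the check that $R_m\le R_1=N$, so that \eqref{eq:mass} applies at this $k$, and the integrality sharpening $Y_k\le m(k+1)-1$ in Step~3.
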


\begin{proof}
By Lemma~\ref{lem:noskip}, $a_{k+1}=m-1$, i.e.\ $(m-1)(k+1)\le Y_k<m(k+1)$; with
\eqref{eq:gap}, $2(m-1)(k+1)-k\le X_k+Y_k<2m(k+1)$, and \eqref{eq:mass} at this $k$
gives \eqref{eq:crossread}.
\end{proof}

That reads $M$ from one width; the widths themselves come from the same construction.
Because $a_\ell,b_\ell$ are nonincreasing, the tail sums \eqref{eq:tails} may be
summed by layers (with $(x)_+:=\max(x,0)$):
\begin{equation}\label{eq:layer}
  \sum_{\ell\ge k}b_\ell=\sum_{r\ge1}(S_r-k+1)_+,\qquad
  \sum_{\ell\ge k}a_\ell=\sum_{r\ge1}(R_r-k+1)_+ .
\end{equation}

The point is that at $k=R_m$ the non-local tail sums \eqref{eq:layer}
\emph{truncate} (Figure~\ref{fig:truncate}), exactly and with no hypothesis on $m$.
Fix $m$, put $k=R_m$, and let $A:=a_k=\floor{Y_{k-1}/k}\ge m$, so that $Y_{k-1}=Ak+\eta$
with $\eta\in\{0,\dots,k-1\}$. The number of $S$-levels that reach past the cut is the
drop $b_k$ itself:
\begin{equation}\label{eq:Tbk}
  \#\{r:S_r\ge k\}=b_k,\qquad\text{because } S_r\ge k\iff b_k\ge r .
\end{equation}
Write $B:=b_k$ and $e:=A-B\in\{0,1\}$ (by \eqref{eq:abdiff}); then $B\ge m-1$. Two
observations truncate the layer sum $\sum_{\ell\ge k}b_\ell=\sum_{r=1}^{B}(S_r-k+1)$:
every level $r<m$ survives ($S_r\ge R_{r+1}\ge R_m=k$ by \eqref{eq:interlace}), and
every $r\in[m,B]$ has $S_r=k$ exactly (there $R_r=k$, since $a_k=A\ge r$ while
$a_{k+1}<m\le r$, so $k\le S_r\le R_r=k$). Hence the tail collapses to
$\sum_{\ell\ge k}b_\ell=\sum_{r<m}(S_r-k+1)+(B-m+1)$, and equating with $Y_{k-1}=Ak+\eta$,
then collecting the multiples of $k=R_m$, gives the first width equation
\emph{exactly}:
\begin{equation}\label{eq:Eident}
  (2m-1)R_m=N+\sum_{r=1}^{m-1}S_r+E_m,\qquad
  E_m=m-D_m,\quad D_m:=(A-m)(k-1)+\eta+e .
\end{equation}
It is closed and lower-triangular, and the correction is \emph{sharply} bounded:
\begin{equation}\label{eq:Ebound}
  1\le D_m\le 2m-1,\qquad\text{so}\qquad 1-m\le E_m\le m-1 .
\end{equation}
For $D_m\ge1$: if $e=1$ this is immediate; if $e=0$ then $a_k=b_k$, and the gap
\eqref{eq:gap} forces $\eta\ge1$ (writing $X_{k-1}=Bk+\xi$, one has
$Y_{k-1}-X_{k-1}=\eta-\xi\ge1$). For $D_m\le2m-1$: since $a_{k+1}<m$, $Y_k<m(k+1)$;
but $Y_k=Y_{k-1}-B=A(k-1)+\eta+e$, so $D_m=Y_k-m(k-1)<2m$, an integer. The skip $A-m$
may be large, yet \eqref{eq:Ebound} makes it cost nothing: a larger skip inflates
$Y_{k-1}$ and the survivor count $B$ in lock-step. \big(For $M=100$, $m=3$: $k=R_3=6$,
$A=a_6=3$, $B=b_6=3$, $e=0$, $\eta=Y_5-18=5$; so $D_3=5$ and $E_3=-2$, matching
$5\cdot6=30=N+S_1+S_2+E_3=15+17-2$.\big)

Because $\abs{E_m}\le m-1<\tfrac12(2m-1)$, the correction is exactly a rounding
remainder: with $\langle x\rangle:=\floor{x+\tfrac12}$ the nearest integer,
\eqref{eq:Eident} says $R_m=\big\langle(N+\sum_{r<m}S_r)/(2m-1)\big\rangle$. The
$b$-staircase gives the mirror statement, and we record the pair together.

\begin{figure}[t]
\centering
\includegraphics[width=0.80\textwidth]{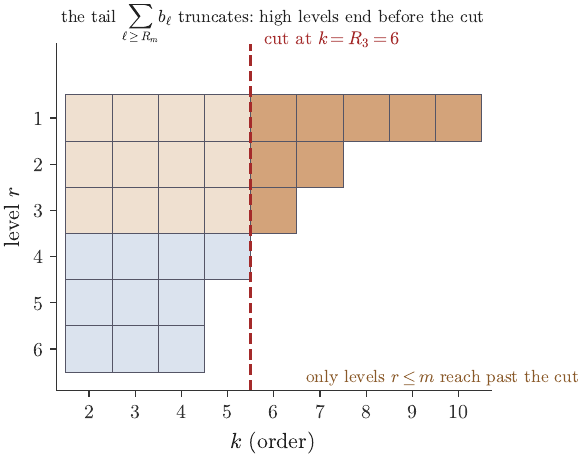}
\caption{Why the coupling closes (the $b$-staircase for $M=100$, cut at $k=R_3=6$).
The tail $\sum_{\ell\ge R_m}b_\ell$ is shaded. Since the widths $S_r$ decrease, only
low levels reach past the cut: the levels $r<m$ survive with full width, while a few
borderline $r\ge m$ contribute a single cell $S_r=R_m$, absorbed into $E_m$. Either
way the non-local tail collapses to a finite lower-triangular sum.}
\label{fig:truncate}
\end{figure}

\begin{lemma}[The width recurrence as nearest-integer rounding]\label{lem:filter}
For \emph{every} $m\ge1$ and every cell $(i,j)$, with $\langle x\rangle:=\floor{x+\tfrac12}$,
\begin{equation}\label{eq:renewround}
  R_m=\left\langle\frac{N+\sum_{r=1}^{m-1}S_r}{\,2m-1\,}\right\rangle,\qquad
  S_m=\left\langle\frac{i+\sum_{r=1}^{m}R_r}{\,2m\,}\right\rangle .
\end{equation}
Equivalently, in terms of the rounding remainders $E_m,F_m$,
\begin{equation}\label{eq:renewR}
  (2m-1)R_m=N+\sum_{r=1}^{m-1}S_r+E_m,\qquad
  2mS_m=i+\sum_{r=1}^{m}R_r+F_m,
\end{equation}
with $1-m\le E_m\le m-1$ and $1-m\le F_m\le m$.
\end{lemma}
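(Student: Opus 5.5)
The $R$-half is already done in the text preceding the lemma: at $k=R_m$ the layer sum \eqref{eq:layer} truncates, giving the exact identity \eqref{eq:Eident} with $1\le D_m\le 2m-1$. Hence $1-m\le E_m\le m-1$. Since $|E_m|\le m-1<\tfrac12(2m-1)$, the quotient $(N+\sum_{r<m}S_r)/(2m-1)$ equals $R_m$ plus an error lying strictly inside $(-\tfrac12,\tfrac12)$. So $R_m$ is its nearest integer, which is the first half of \eqref{eq:renewround}.

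The plan for $S_m$ is to mirror this argument on the $b$-staircase, cutting at $k=S_m$. Put $B:=b_k=\floor{X_{k-1}/k}\ge m$ and write $X_{k-1}=Bk+\xi$ with $0\le\xi\le k-1$. Put $A:=a_k$, so $A-B=e\in\{0,1\}$.

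The layer identity \eqref{eq:layer} gives $\sum_{\ell\ge k}a_\ell=\sum_{r}(R_r-k+1)_+$, and the number of surviving levels is $\#\{r:R_r\ge k\}=a_k=A$. I then sort these levels.
\begin{itemize}
\item Levels $r\le m$ survive with full width, since $R_r\ge R_m\ge S_m=k$ by \eqref{eq:interlace}. Note the range is now $r\le m$ rather than $r<m$, which explains the index shift.
\item Levels $r\in[m+1,A]$ have $R_r=k$ exactly. Indeed $R_r\ge k$ because $a_k\ge r$. Also $R_r\le S_{r-1}$, and $S_{r-1}=k$ because $b_k=B\ge r-1\ge m$ while $b_{k+1}<m\le r-1$.
\end{itemize}
Hence $X_{k-1}-i=\sum_{r\le m}(R_r-k+1)+(A-m)$. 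Equating this with $Bk+\xi$ and collecting multiples of $k$ gives
\[
2mS_m=i+\sum_{r=1}^m R_r+F_m,\qquad F_m=m-D'_m,\quad D'_m:=(B-m)(k-1)+\xi-e .
\]

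It remains to bound $D'_m$; this sign bookkeeping is the main obstacle, because $e$ now enters with a minus sign.

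For the upper bound, use $X_k=X_{k-1}-A=B(k-1)+\xi-e$. Since $b_{k+1}<m$, we have $X_k<m(k+1)$, so $D'_m=X_k-m(k-1)<2m$. Thus $D'_m\le 2m-1$ and $F_m\ge 1-m$.

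For the lower bound $D'_m\ge 0$, which gives $F_m\le m$:
\begin{itemize}
\item If $e=0$, this is immediate.
\item If $e=1$ and $B>m$, then $(B-m)(k-1)\ge k-1\ge1$.
\item If $e=1$ and $B=m$, I need $\xi\ge1$. Suppose $\xi=0$. Then $Y_{k-1}\ge X_{k-1}+1$ by \eqref{eq:gap}, and $a_k=B+1$ forces $Y_{k-1}\ge(B+1)k$. I would still have to rule out $\xi=0$ here by a direct check, and I expect this to need care. If it fails, the weaker bound $D'_m\ge -1$ is what I would fall back on and compare against the stated $F_m\le m$.
\end{itemize}

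Finally, $|F_m|\le m$ gives $-m\le F_m\le m$. Here the asymmetric range $1-m\le F_m\le m$ matters: the error $F_m/(2m)$ then lies in $(-\tfrac12,\tfrac12]$. This matches the convention $\langle x\rangle=\floor{x+\tfrac12}$ only when the half-integer case is resolved correctly, so I would check that the tie direction is consistent with this convention before concluding the second half of \eqref{eq:renewround}.
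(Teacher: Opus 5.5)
You follow the paper's route: cut the $b$-staircase at $k=S_m$, count the $a$-survivors, reach $F_m=m-D'_m$ with $D'_m=(B-m)(k-1)+\xi-e$, and get the correct upper bound $D'_m\le 2m-1$. But you leave open the one inequality that makes the second half of the lemma true.

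The bound $D'_m\ge0$ when $e=1$ is not optional. Your fallback $D'_m\ge-1$ would allow $F_m=m+1$. Then $(i+\sum_{r\le m}R_r)/(2m)=S_m-\tfrac{m+1}{2m}<S_m-\tfrac12$, which rounds to $S_m-1$, and \eqref{eq:renewround} would be false.

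The missing step is one line; you used the wrong half of \eqref{eq:gap}. The lower half, $Y_{k-1}\ge X_{k-1}+1$, gives nothing here. The upper half, $Y_{k-1}-X_{k-1}\le k-1$, settles it. Since $e=1$ means $Y_{k-1}=(B+1)k+\eta'$ with $\eta'\ge0$, you get $k+\eta'-\xi\le k-1$. Hence $\xi\ge\eta'+1\ge1$ and $D'_m\ge(B-m)(k-1)\ge0$. This holds for every $B\ge m$, so your split into $B>m$ and $B=m$ is unnecessary.

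The tie check you deferred then closes at once. The quotient equals $S_m-F_m/(2m)\in[S_m-\tfrac12,\,S_m+\tfrac12)$, and $\floor{x+\tfrac12}$ maps this whole interval to $S_m$, the tie $F_m=m$ included.

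Second, the lemma is asserted for \emph{every} $m\ge1$. Your $S$-computation, and the pre-lemma text you cite for the $R$-half, both presuppose that the level is reached ($b_k\ge m$, resp.\ $a_k\ge m$, for some $k\ge2$). At an empty level, $S_m=1$ or $R_m=1$, the cut would sit at $k=1$, where $a_1,b_1$ are undefined. These levels need their own argument, as in the paper:
\begin{itemize}
\item At an empty level all higher widths equal $1$: $S_r=1$ for $r\ge m$, resp.\ $R_r=1$ for $r>m$, by \eqref{eq:interlace} and monotonicity of the widths in $m$.
\item The width reading \eqref{eq:MsumRS} then gives $N+\sum_{r<m}S_r=M+m-1$, resp.\ $i+\sum_{r\le m}R_r=M+m-1$. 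So $E_m=m-M$, resp.\ $F_m=m-M+1$.
\item Emptiness means $\floor{M/2}<m$, resp.\ $\floor{(M-1)/2}<m$, i.e.\ $M\le 2m-1$, resp.\ $M\le 2m$. Together with $M\ge1$, this places $E_m$ and $F_m$ in the stated ranges.
\end{itemize}
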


The width process is therefore a deterministic, lower-triangular
\emph{nearest-integer system}: $R_m$ is the rounded average of the datum $N$ and the
lower widths, $S_m$ likewise. This is exact---no restriction on $m$ relative to $N$,
no no-skipping---and it is the structure that makes \S\ref{sec:wallis} clean. Removing
only the rounding leaves a system solvable in closed form; restoring it
(\S\ref{sec:sqrt}) moves each width by less than one.

\begin{proof}
The first equation of \eqref{eq:renewround} is \eqref{eq:Eident}--\eqref{eq:Ebound}:
since $\abs{E_m}\le m-1$, the integer $R_m$ lies within $\tfrac{m-1}{2m-1}<\tfrac12$ of
$(N+\sum_{r<m}S_r)/(2m-1)$, hence is its nearest integer. The second is the mirror
computation on the $b$-staircase. Put $k=S_m$, $B:=b_k=\floor{X_{k-1}/k}\ge m$,
$X_{k-1}=Bk+\xi$, and $A:=a_k=B+e$ with $e\in\{0,1\}$. The $a$-tail survivors number
$\#\{r:R_r\ge k\}=a_k=A$ (as $R_r\ge k\iff a_k\ge r$); all $r\le m$ survive
($R_r\ge R_m\ge S_m=k$) and each $r\in(m,A]$ has $R_r=k$ ($R_r\le R_{m+1}\le S_m=k\le
R_r$). Hence $\sum_{\ell\ge k}a_\ell=\sum_{r\le m}(R_r-k+1)+(A-m)$, and equating with
$X_{k-1}=Bk+\xi$,
\[
  2mS_m=i+\sum_{r=1}^{m}R_r+F_m,\qquad F_m=m-D'_m,\quad D'_m:=(B-m)(k-1)+\xi-e .
\]
Here $0\le D'_m\le2m-1$, so $1-m\le F_m\le m$. For $D'_m\ge0$: if $e=0$ it is clear;
if $e=1$ the gap \eqref{eq:gap} forces $\xi\ge1$ (then $Y_{k-1}-X_{k-1}=k+\eta'-\xi\le
k-1$ needs $\xi\ge\eta'+1$). For $D'_m\le2m-1$: $b_{k+1}<m$ gives $X_k<m(k+1)$ with
$X_k=B(k-1)+\xi-e$, so $D'_m=X_k-m(k-1)<2m$. As $\abs{F_m}\le m=\tfrac12\cdot2m$ and
$\langle\cdot\rangle$ rounds half up, $S_m=\big\langle(i+\sum_{r\le m}R_r)/(2m)\big\rangle$.
(At the empty levels $R_m=1$ or $S_m=1$ the identities hold with $E_m=m-M$,
$F_m=m-M+1$, where $M=\Tar_{i,j}$ is the value itself: there $S_r=1$ for $r\ge m$, so
the width reading \eqref{eq:MsumRS} gives $N+\sum_{r<m}S_r-(m-1)=M$. The two ranges
differ: $R_m=1$ forces $\floor{M/2}<m$, i.e.\ $M\le2m-1$, so $E_m=m-M\in[1-m,m-1]$;
$S_m=1$ forces $\floor{(M-1)/2}<m$, i.e.\ $M\le2m$, so $F_m=m-M+1\in[1-m,m]$.)
\end{proof}

\section{The two Wallis modes}\label{sec:wallis}

The width recurrence (Lemma~\ref{lem:filter}) is an exact nearest-integer system.
Drop the rounding, and what remains is a plain linear system with an \emph{explicit}
closed-form solution---and it is that solution, tracked back to the true $R_m,S_m$ to
within one cell per level (\S\ref{sec:sqrt}), that carries the whole theorem. Let
$(\bar R_m,\bar S_m)$ solve the error-free system
\begin{equation}\label{eq:barsys}
  (2m-1)\bar R_m=N+\sum_{r=1}^{m-1}\bar S_r,\qquad
  2m\,\bar S_m=i+\sum_{r=1}^{m}\bar R_r\qquad(m\ge1).
\end{equation}
Being lower-triangular, it has a unique solution; we write it down.

Introduce the central binomial fractions and their reciprocal partners
\begin{equation}\label{eq:cw}
  c_m:=\frac{1}{4^m}\binom{2m}{m}\quad(c_0=1),\qquad
  w_m:=\frac{1}{2m\,c_m}=\prod_{r=1}^{m-1}\frac{2r}{2r+1}\quad(w_1=1).
\end{equation}
The one-line ratio $c_m/c_{m-1}=(2m-1)/2m$ gives $(2m-1)c_{m-1}=2m\,c_m$, so the
reciprocal partner is $w_m=1/(2m\,c_m)=1/\big((2m-1)c_{m-1}\big)$.

\begin{lemma}[Two modes]\label{lem:modes}
With $N=(j+1)+i$, the error-free system \eqref{eq:barsys} has the unique solution
\begin{equation}\label{eq:modes}
  \bar R_m=(j+1)\,c_{m-1}+i\,w_m,\qquad
  \bar S_m=(j+1)\,c_m+i\,w_m .
\end{equation}
\end{lemma}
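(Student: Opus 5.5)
Because \eqref{eq:barsys} is lower-triangular with nonzero diagonal coefficients $2m-1$ and $2m$, it has exactly one solution. We can compute $\bar R_1,\bar S_1,\bar R_2,\dots$ in turn. So it is enough to check that the proposed formulas \eqref{eq:modes} satisfy both equations for every $m\ge1$. The system is linear in the data $(N,i)$, and $N=(j+1)+i$, so the verification splits into two independent checks. In the first we take data $(N,i)=(1,0)$ and claim $\bar R_m=c_{m-1}$, $\bar S_m=c_m$. In the second we take data $(N,i)=(1,1)$ and claim $\bar R_m=\bar S_m=w_m$. Superposing $(j+1)$ times the first with $i$ times the second gives \eqref{eq:modes}.

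The tool for both checks is a pair of telescoping identities for partial sums. For the first mode we need
\[
  \sum_{r=0}^{m-1}c_r=2m\,c_m .
\]
This follows by induction from $c_m/c_{m-1}=(2m-1)/(2m)$: the step reads $2m c_m+c_m=(2m+1)c_m=(2m+2)c_{m+1}$, which is exactly that ratio at $m+1$. Using it in the first equation, the right side is $1+\sum_{r=1}^{m-1}c_r=\sum_{r=0}^{m-1}c_r=2mc_m=(2m-1)c_{m-1}$, which matches $(2m-1)\bar R_m$. In the second equation the right side is $0+\sum_{r=1}^{m}c_{r-1}=2mc_m$, which matches $2m\bar S_m$.

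For the second mode we need
\[
  1+\sum_{r=1}^{m}w_r=(2m+1)\,w_{m+1}.
\]
By \eqref{eq:cw}, $w_{m+1}=w_m\cdot\frac{2m}{2m+1}$, so $(2m+1)w_{m+1}=2mw_m$. The identity is therefore equivalent to $1+\sum_{r\le m}w_r=2mw_m$. This holds at $m=1$, since $1+1=2$. The inductive step is $2mw_m+w_{m+1}=(2m+1)w_{m+1}+w_{m+1}=(2m+2)w_{m+1}$, which is the claim at $m+1$. Then the second equation $2m\bar S_m=1+\sum_{r\le m}w_r=2mw_m$ holds. The first equation $(2m-1)\bar R_m=1+\sum_{r\le m-1}w_r=2(m-1)w_{m-1}$ holds for $m\ge2$, because $2(m-1)w_{m-1}=(2m-1)w_m$ by the ratio above. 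At $m=1$ it reads $w_1=1$, which is true.

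No step is a real obstacle; the proof is routine bookkeeping. The only place needing care is the index conventions: the sum for $\bar R_m$ runs to $m-1$ while the sum for $\bar S_m$ runs to $m$, and $m=1$ has an empty sum. To settle these, I would verify the base cases $m=1,2$ against the explicit values $c_0=1$, $c_1=\tfrac12$, $w_1=1$, $w_2=\tfrac23$.
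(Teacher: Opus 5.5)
Your proof is correct, but it takes a different route from the paper's.

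The paper does not start from the answer. It runs the system as a forward sweep in the running sums $A_m=N+\sum_{r\le m}\bar S_r$ and $B_m=i+\sum_{r\le m}\bar R_r$. It then finds the two combinations $G_m=A_m-B_m$ and $K_m=B_m-2mG_m$, which decouple into the scalar recurrences $G_m=\frac{2m-1}{2m}G_{m-1}$ and $K_m=\frac{2m}{2m-1}K_{m-1}$. Their solutions $G_m=(j+1)c_m$ and $K_m=i/c_m$ yield $\bar R_m$ and $\bar S_m$ directly.

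You instead take the formula as given and get uniqueness from triangularity. You then split the data by linearity and check each piece with a closed-form partial-sum identity: $\sum_{r<m}c_r=2mc_m$ and $1+\sum_{r\le m}w_r=2mw_m$. Each is a one-line induction from $c_m/c_{m-1}=(2m-1)/(2m)$.

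The two proofs are closer than they look. Your data vectors $(N,i)=(1,0)$ and $(1,1)$ are exactly the pure modes, namely $G_0=1,K_0=0$ and $G_0=0,K_0=1$. Your two identities are precisely the closed forms of the running sums $A_{m-1}$ and $B_m$ in those modes.

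What each buys:
\begin{itemize}
\item Your version is shorter and every step is mechanical, but it is only a verification, so it needs the answer in hand.
\item The paper's diagonalisation derives the solution and settles existence and uniqueness in the same sweep. It also justifies calling $c_m$ and $1/c_m$ the two modes, which later supply $\pi$.
\end{itemize}
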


\begin{proof}
Run \eqref{eq:barsys} as a forward sweep: with $A_m:=N+\sum_{r=1}^{m}\bar S_r$ and
$B_m:=i+\sum_{r=1}^{m}\bar R_r$ (so $A_0=N$, $B_0=i$), the two equations read
\begin{equation}\label{eq:ABsweep}
  \bar R_m=\frac{A_{m-1}}{2m-1},\quad B_m=B_{m-1}+\bar R_m;\qquad
  \bar S_m=\frac{B_m}{2m},\quad A_m=A_{m-1}+\bar S_m .
\end{equation}
Two linear combinations \emph{diagonalise} the sweep. Set $G_m:=A_m-B_m$ and
$K_m:=B_m-2mG_m$. From \eqref{eq:ABsweep}, $G_m=G_{m-1}+(\bar S_m-\bar R_m)$, and
eliminating $A_{m-1},B_m$ collapses the increment to $\bar S_m-\bar R_m=-G_{m-1}/(2m)$;
the same elimination (using $A_{m-1}=K_{m-1}+(2m-1)G_{m-1}$) gives the $K$-increment.
Thus the two combinations obey \emph{decoupled scalar} recurrences,
\begin{equation}\label{eq:GK}
  G_m=\frac{2m-1}{2m}\,G_{m-1},\qquad K_m=\frac{2m}{2m-1}\,K_{m-1},
\end{equation}
with initial data $G_0=N-i=j+1$ and $K_0=i$. Hence, with $c_m$ as in \eqref{eq:cw},
\[
  G_m=(j+1)\,c_m,\qquad K_m=\frac{i}{c_m}.
\]
Finally $A_{m-1}=K_{m-1}+(2m-1)G_{m-1}$ recovers the widths:
\[
  \bar R_m=\frac{A_{m-1}}{2m-1}=G_{m-1}+\frac{K_{m-1}}{2m-1}=(j+1)\,c_{m-1}+i\,w_m,
\]
using $w_m=1/\big((2m-1)c_{m-1}\big)=1/(2m\,c_m)$; and $B_m=K_m+2mG_m$ gives
$\bar S_m=G_m+K_m/(2m)=(j+1)\,c_m+i\,w_m$.
\end{proof}

So the two boundary data have \emph{diagonalised}: the gap datum $j+1$ excites the
contracting Wallis mode $G_m=(j+1)c_m$, and the row datum $i$ its reciprocal
$K_m=i/c_m$. Restored to the widths, the $K$-mode contributes the same $i\,w_m$ to both
$\bar R_m,\bar S_m$, while the $G$-mode contributes the neighbouring $c_{m-1},c_m$.
That is the sense in which the profile carries \emph{two Wallis modes}.

The true widths never stray far from this profile---in fact each stays within a
\emph{single cell} of it, at every level.

\begin{lemma}[One-cell tracking]\label{lem:track}
For every $m\ge1$ and every cell,
\begin{equation}\label{eq:onecell}
  \abs{R_m-\bar R_m}<1,\qquad \abs{S_m-\bar S_m}<1 .
\end{equation}
Thus each true width is one of the two integers bracketing its Wallis prediction; if
$\bar R_m$ or $\bar S_m$ is itself an integer, the width equals it.
\end{lemma}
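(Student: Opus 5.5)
The plan is a strong induction on $m$ that compares the exact width recurrence of Lemma~\ref{lem:filter} with the error-free system \eqref{eq:barsys} term by term. Neither the diagonalisation of Lemma~\ref{lem:modes} nor any closed form is needed. Set $u_m:=R_m-\bar R_m$ and $v_m:=S_m-\bar S_m$. Subtracting \eqref{eq:barsys} from \eqref{eq:renewR} removes the data $N$ and $i$ and leaves the same lower-triangular system, now driven only by the rounding remainders:
\[
  (2m-1)\,u_m=\sum_{r=1}^{m-1}v_r+E_m,\qquad
  2m\,v_m=\sum_{r=1}^{m}u_r+F_m .
\]
Since Lemma~\ref{lem:filter} holds for every $m$ and every cell, including the empty levels, these identities hold with no restriction.

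The inductive step bounds each accumulated error sum by the number of its terms. Assume $|u_r|<1$ and $|v_r|<1$ for all $r<m$. Then $\bigl|\sum_{r<m}v_r\bigr|<m-1$ when $m\ge2$, and the sum is $0$ when $m=1$. Combined with $|E_m|\le m-1$ this gives
\[
  (2m-1)|u_m|<(m-1)+(m-1)=2m-2<2m-1 ,
\]
so $|u_m|<1$. In the case $m=1$ one has directly $u_1=0$, because $R_1=N=\bar R_1$ (Lemma~\ref{lem:interlace} and \eqref{eq:modes} with $c_0=w_1=1$). Now that $|u_r|<1$ is known for all $r\le m$, we get $\bigl|\sum_{r\le m}u_r\bigr|<m$, and with $|F_m|\le m$,
\[
  2m|v_m|<m+m=2m ,
\]
so $|v_m|<1$. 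This closes the induction. The final claim of the lemma is then immediate: an integer within distance less than $1$ of an integer equals it, and otherwise it is one of the two integers bracketing the real number.

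The main obstacle is conceptual rather than computational. A naive approach would propagate the errors through the two Wallis modes, and there the contracting mode accumulates the $O(1)$ per-level errors like $\sqrt m$, which looks hopeless. The point is that there is no such accumulation. Each step divides by $2m-1$ or $2m$, and the hypotheses $|u_r|,|v_r|<1$ on the earlier levels keep the numerator below the divisor. This works only because Lemma~\ref{lem:filter} supplies the sharp bounds $|E_m|\le m-1$ and $|F_m|\le m$, which are about half the divisor.

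The only care needed is to keep the inequalities strict. In the $S$-step, $|F_m|$ can equal $m$, so strictness has to come from the strict bound $\bigl|\sum_{r\le m}u_r\bigr|<m$ supplied by the inductive hypothesis. At $m=1$ the base value $u_1=0$ gives $\bigl|\sum_{r\le 1}u_r\bigr|=0<1$, so strictness holds there as well.
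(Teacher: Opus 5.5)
Your proof is correct and is essentially the paper's argument: the same strong induction with base $R_1=N=\bar R_1$, comparing the exact width recurrence of Lemma~\ref{lem:filter} to the error-free system \eqref{eq:barsys}, with the divisors $2m-1$ and $2m$ absorbing the accumulated strays. The only difference is cosmetic: you work with the remainder form \eqref{eq:renewR} and the bounds $|E_m|\le m-1$, $|F_m|\le m$, while the paper uses the nearest-integer form \eqref{eq:renewround} and the fact that $\langle\cdot\rangle$ moves its argument by at most $\tfrac12$; your version yields the marginally sharper $|R_m-\bar R_m|<(2m-2)/(2m-1)$ for $m\ge2$.
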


\begin{proof}
Set $p_m:=R_m-\bar R_m$ and $q_m:=S_m-\bar S_m$. The profile solves the error-free
system \eqref{eq:barsys} exactly, so subtracting $\bar R_m=(N+\sum_{r<m}\bar S_r)/(2m-1)$
from the nearest-integer form \eqref{eq:renewround} of the true width,
\begin{equation}\label{eq:strayround}
  R_m=\Big\langle\bar R_m+\frac{\sum_{r<m}q_r}{2m-1}\Big\rangle,\qquad
  S_m=\Big\langle\bar S_m+\frac{\sum_{r\le m}p_r}{2m}\Big\rangle .
\end{equation}
We show $\abs{p_m},\abs{q_m}<1$ by induction. \emph{Base:} $p_1=0$, since $R_1=N$
(Lemma~\ref{lem:interlace}) and $\bar R_1=(j+1)c_0+iw_1=N$; then $S_1=\langle\bar S_1+
p_1/2\rangle$ gives $\abs{q_1}\le\tfrac12$. \emph{Step:} assume $\abs{p_r},\abs{q_r}<1$
for $r<m$. Then the shift inside the first rounding is
$\big|\sum_{r<m}q_r/(2m-1)\big|<(m-1)/(2m-1)<\tfrac12$; since $\langle\cdot\rangle$
moves its argument by at most $\tfrac12$, \eqref{eq:strayround} gives
\[
  \abs{p_m}\le\Big|\frac{\sum_{r<m}q_r}{2m-1}\Big|+\frac12<\frac12+\frac12=1 .
\]
With this, $\big|\sum_{r\le m}p_r/(2m)\big|<m/(2m)=\tfrac12$, and the same step on the
$S$-side gives $\abs{q_m}<1$. The averaging denominators $2m-1$ and $2m$ exactly match
the number of accumulated strays, so the errors are washed out, level by level.
\end{proof}

The classical Wallis asymptotics of the central binomial coefficient,
\begin{equation}\label{eq:wallisasy}
  c_m=\frac{1}{\sqrt{\pi m}}\Big(1+O(m^{-1})\Big),\qquad
  w_m=\frac{1}{2m\,c_m}=\frac{\sqrt\pi}{2\sqrt m}\Big(1+O(m^{-1})\Big),
\end{equation}
are the sole entry point of $\pi$: they are a property of the recursion's own
coefficients $(2r-1)/2r$, owing nothing to the two edge theorems. Feeding
\eqref{eq:wallisasy} into \eqref{eq:modes},
\begin{equation}\label{eq:sqrtprofile}
  \sqrt m\,\bar R_m
  =(j+1)\sqrt m\,c_{m-1}+i\sqrt m\,w_m
  =\frac{j+1}{\sqrt\pi}+\frac{i\sqrt\pi}{2}+O\!\Big(\frac Nm\Big).
\end{equation}

This line already contains the theorem. Reading $M$ at a single level $m$
gives $\sqrt M\approx\sqrt m\,R_m\approx\sqrt m\,\bar R_m$ (made precise in
\S\ref{sec:sqrt}), so
\[
  \sqrt{M}\ \approx\ \frac{j+1}{\sqrt\pi}+\frac{i\sqrt\pi}{2}
  \ =\ \frac{\pi i+2j+2}{2\sqrt\pi},
\]
whose square is $(\pi i+2j+2)^2/(4\pi)$. Thus it is the \emph{square root} of the
density coefficient that is linear. Define, explicitly,
\begin{equation}\label{eq:Cdef}
  \mathcal C(\alpha):=\frac{[\,2+(\pi-2)\alpha\,]^2}{4\pi}\qquad(0\le\alpha\le1),
\end{equation}
so that Theorem~\ref{thm:main} reads $\Tar_{i,j}=N^2\mathcal C(i/N)+O(N^{4/3})$; then
\begin{equation}\label{eq:sqrtlaw}
  \sqrt{\mathcal C(\alpha)}=(1-\alpha)\,\underbrace{\tfrac{1}{\sqrt\pi}}_{\sqrt{\mathcal C(0)}}
  +\alpha\,\underbrace{\tfrac{\sqrt\pi}{2}}_{\sqrt{\mathcal C(1)}},
\end{equation}
the straight-line interpolation between the two edge rates $\sqrt{\mathcal C(0)}=1/\sqrt\pi$
(row~$0$) and $\sqrt{\mathcal C(1)}=\sqrt\pi/2$ (column~$0$). This linear-in-$\sqrt{\ }$ law
is why the level contours are straight (\S\ref{sec:contour}); the centred form
$\pi i+2j+2=2N+(\pi-2)i$ it produces is natural. Replacing the $+2$ by any constant
$+c$ shifts the main term by $O(N)$, absorbed into the error; we keep $+2$ because it
is exactly the value the Wallis modes \eqref{eq:modes} deliver, not because it is the
optimal centring.

\section{Proof of the square-root law}\label{sec:sqrt}

We now restore the errors and prove Theorem~\ref{thm:main} in the equivalent
\emph{square-root form}
\begin{equation}\label{eq:sqrtmain}
  \sqrt{\Tar_{i,j}}=L_{i,j}+O\!\big(N^{1/3}\big),\qquad
  L_{i,j}:=\frac{\pi i+2j+2}{2\sqrt\pi}=\frac{i\sqrt\pi}{2}+\frac{j+1}{\sqrt\pi} .
\end{equation}
This is equivalent to \eqref{eq:main}: since $L_{i,j}\asymp N$ uniformly,
$\Tar_{i,j}-L_{i,j}^2=(\sqrt{\Tar_{i,j}}-L_{i,j})(\sqrt{\Tar_{i,j}}+L_{i,j})
=O(N^{1/3})\cdot O(N)=O(N^{4/3})$, and $L_{i,j}^2=(\pi i+2j+2)^2/(4\pi)$; the
converse is the same computation.

Why not simply sum the exact area \eqref{eq:MsumRS}? Because
$M=i+1+\sum_m(R_m-1)$ runs over $\asymp a_2=\floor{M/2}\asymp N^2$ nonempty levels, and
replacing each $R_m$ by its profile $\bar R_m$ commits a one-cell error \emph{per
level}; these have nonzero mean and accumulate to $\Theta(N^2)$---worse than the
trivial bound. The remedy is to read $M$ off a \emph{single} level, sharply, so nothing
accumulates.

To recover $M$ we do exactly that: the single-level reading
$M\asymp mR_m^2$ (Lemma~\ref{lem:crossread}, established in \S\ref{sec:filter})
turns one width $R_m$ into the value.
Which level $m$ should we use? Two errors pull in opposite directions
(Figure~\ref{fig:error}).
\begin{enumerate}[label=\textup{(\arabic*)},leftmargin=2.4em]
\item \emph{The reading is only a sandwich.} Lemma~\ref{lem:crossread} traps $M$
between $(m-2)R_m^2$ and about $mR_m^2$, a gap of about $2R_m^2\asymp N^2/m$ (since
$R_m\asymp N/\sqrt m$). Its \emph{relative} size is $\asymp1/m$, so a \emph{higher}
level reads $M$ more sharply---the gap shrinks like $N^2/m$.
\item \emph{The width is quantised.} We never know $R_m$ exactly, only that it lies
within one cell of the smooth profile, $\abs{R_m-\bar R_m}<1$
(Lemma~\ref{lem:track}). Through $M\asymp mR_m^2$ this one-cell rounding is amplified:
a change $dR_m$ moves $M$ by $\approx 2mR_m\,dR_m$, so the sub-unit error costs
$\asymp mR_m\asymp m\cdot N/\sqrt m=N\sqrt m$ in $M$---and this \emph{grows} with the
level.
\end{enumerate}
Error (1), $\asymp N^2/m$, wants $m$ large; error (2), $\asymp N\sqrt m$, wants $m$
small. They balance when $N^2/m=N\sqrt m$, i.e.\ $m^{3/2}=N$, i.e.\ at
$m\asymp N^{2/3}$; there both equal $N^{4/3}$, so $M$ is pinned to $O(N^{4/3})$, and
dividing by $2\sqrt M\asymp N$ gives $O(N^{1/3})$ for $\sqrt M$. That balance is the
exponent. (The reading is valid at this level: no-skipping holds there, since
$R_m\asymp N/\sqrt m\gtrsim m$ at $m\asymp N^{2/3}$---checked in the proof.)

\begin{figure}[t]
\centering
\includegraphics[width=0.72\textwidth]{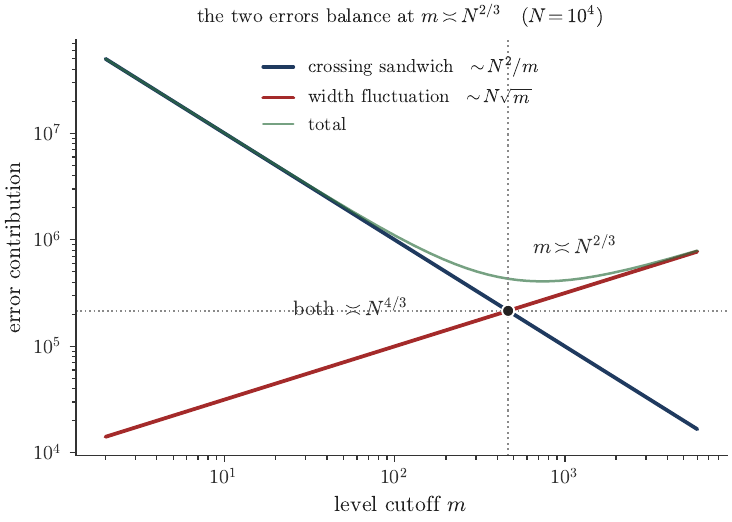}
\caption{Where the exponent comes from. Reading $M$ at level $m$ leaves a reading
sandwich of half-width $\asymp N^2/m$ (decreasing), while the one-cell rounding of the
width contributes $\asymp N\sqrt m$ (increasing). The two errors cross at $m\asymp N^{2/3}$,
where both equal $\asymp N^{4/3}$ (that is $\asymp N^{1/3}$ inside the square root);
this optimum is where the proof reads $M$.}
\label{fig:error}
\end{figure}

Both ingredients are now in hand: the single-level reading $M\asymp mR_m^2$
(Lemma~\ref{lem:crossread}) that turns a width into the value, and the one-cell bound
$\abs{R_m-\bar R_m}<1$ (Lemma~\ref{lem:track}) that pins the width to its Wallis
profile. The theorem is their balance.

\begin{proof}[Proof of Theorem~\ref{thm:main}]
We fix the level explicitly and give an effective bound. Two inputs supply the
constants. First, the classical two-sided Wallis bounds
\begin{equation}\label{eq:wallistwo}
  \frac{1}{\sqrt{\pi(m+\tfrac12)}}<c_m<\frac{1}{\sqrt{\pi m}}\qquad(m\ge1),
\end{equation}
proved by induction from $c_m/c_{m-1}=(2m-1)/2m$, sharpen the asymptotics
\eqref{eq:wallisasy}. Fed through $w_m=1/(2mc_m)$ into the profile \eqref{eq:modes},
they give, with $L_{i,j}=(j+1)/\sqrt\pi+i\sqrt\pi/2$ as in \eqref{eq:sqrtmain}, the
explicit bracket
\begin{equation}\label{eq:profilebracket}
  \frac{N}{\sqrt{\pi m}}<\bar R_m,\qquad
  0<\sqrt m\,\bar R_m-L_{i,j}<\frac{N}{m-1}\qquad(m\ge2).
\end{equation}
(Both are termwise: $\sqrt m\,c_{m-1}\in\big(\tfrac1{\sqrt\pi},
\tfrac1{\sqrt\pi}\sqrt{\tfrac m{m-1}}\big)$ and $\sqrt m\,w_m=\tfrac1{2\sqrt m\,c_m}\in
\big(\tfrac{\sqrt\pi}2,\tfrac{\sqrt\pi}2\sqrt{\tfrac{m+1/2}m}\big)$ by
\eqref{eq:wallistwo}, and $\sqrt{1+x}\le1+x$ bounds the two strays.) Second, the
one-cell bound $\abs{R_m-\bar R_m}<1$ (Lemma~\ref{lem:track}). Note also
$L_{i,j}\le\tfrac{\sqrt\pi}2N<N$.

\emph{Fix the level:} $m:=\floor{\tfrac14 N^{2/3}}$, and take $N\ge N_0$; one may take
$N_0=300$ (this makes $m\ge2$ and validates the three ``$N\ge N_0$'' steps below---the
binding one is $m\ge10$, i.e.\ $N^{2/3}\ge40$; the finitely many smaller cells are
absorbed into the implied constant).

\emph{No-skipping holds here.} By \eqref{eq:profilebracket} and Lemma~\ref{lem:track},
$R_m>\bar R_m-1>N/\sqrt{\pi m}-1$. Since $m\le\tfrac14N^{2/3}$ gives
$N/\sqrt{\pi m}\ge\tfrac2{\sqrt\pi}N^{2/3}$, while $2m+2\le\tfrac12N^{2/3}+2$, we get
$R_m\ge2m+1$ for $N\ge N_0$; likewise $S_m\ge2m+2$. So Lemma~\ref{lem:noskip} applies,
and with it the single-level reading \eqref{eq:crossread}.

\emph{Read.} At this level $\sqrt m\,\bar R_m<L_{i,j}+N/(m-1)<N$, so $\bar R_m<N/\sqrt m$
and $R_m<1.2\,N/\sqrt m$ (Lemma~\ref{lem:track}, $N\ge N_0$). Lemma~\ref{lem:crossread}
gives $(m-2)R_m^2\le M<mR_m^2+mR_m+\tfrac12$; taking square roots
($\sqrt{1-2/m}\ge1-2/m$, $\sqrt{1+x}\le1+\tfrac x2$),
\begin{equation}\label{eq:readexp}
  \Big|\sqrt M-\sqrt m\,R_m\Big|<\frac{3N}{m}+\sqrt m .
\end{equation}
Adding $\abs{\sqrt m\,R_m-\sqrt m\,\bar R_m}=\sqrt m\,\abs{R_m-\bar R_m}<\sqrt m$ and the
second bound of \eqref{eq:profilebracket} (with $N/(m-1)\le2N/m$),
\begin{equation}\label{eq:allexp}
  \Big|\sqrt{\Tar_{i,j}}-L_{i,j}\Big|<\frac{5N}{m}+2\sqrt m .
\end{equation}

\emph{Balance.} With $m=\floor{\tfrac14 N^{2/3}}$ one has $N/m<8N^{1/3}$ and
$\sqrt m\le\tfrac12N^{1/3}$ for $N\ge N_0$, so \eqref{eq:allexp} becomes the explicit
bound
\[
  \Big|\sqrt{\Tar_{i,j}}-L_{i,j}\Big|<5\cdot8\,N^{1/3}+2\cdot\tfrac12N^{1/3}=41\,N^{1/3}.
\]
This is uniform in $i,j$ and is $O(N^{1/3})$, which is
\eqref{eq:sqrtmain}. Squaring gives \eqref{eq:main}. (The constant $41$ is far from
best possible; numerically the deviation is well below $N^{1/3}$, and the true error is
smaller still---see \S\ref{sec:remarks}.)
\end{proof}

Theorem~\ref{thm:main} proves Beluhov's conjecture, as
$\big((\pi i+2j+2)^2-(\pi i+2j)^2\big)/(4\pi)=O(N)$.

\begin{remark}[Consistency with the edges]\label{rem:edges}
Setting $j=0$ in \eqref{eq:main} gives
$\Tar_{i,0}=(\pi i+2)^2/(4\pi)+O(i^{4/3})=\tfrac{\pi}{4}i^2+O(i^{4/3})$; setting
$i=0$ gives $\Tar_{0,j}=(2j+2)^2/(4\pi)+O(j^{4/3})=(j+1)^2/\pi+O(j^{4/3})$. Both
reproduce the main terms of the edge estimates \eqref{eq:andersson} and
\eqref{eq:brolineloeb}, as a check. On the column edge our error $O(i^{4/3})$ matches
Andersson's; on the row edge our $O(j^{4/3})$ is \emph{weaker} than Broline--Loeb's
$O(j)$. Our contribution is thus the uniform interior estimate, not an improvement on
either edge. In the square-root law
\eqref{eq:sqrtlaw} the two edges are literally the two endpoints $\alpha=0,1$;
neither is used in the proof.

More is true in the geometry. Andersson's result is really about the \emph{counting
function} of the Flavius sieve, $2\sqrt{n/\pi}+O(n^{1/6})$, whose $O(n^{1/6})$ is the
column-edge form of the $O(i^{4/3})$ above. The boundary-layer width $O(V^{1/6})$ of
Corollary~\ref{cor:contour} carries the \emph{same} exponent $\tfrac16$ into the whole
interior: our estimate extends Andersson's edge strength, uniformly, to every
direction $\alpha\in[0,1]$, not just $\alpha=1$.
\end{remark}

\section{Contour linearity}\label{sec:contour}

This corollary is the theorem drawn as a picture. By the bijection~\ref{P:bijection} a
value equals the number of cells beneath its contour---in effect the area below
it---and \eqref{eq:main} says that area is $(\rho+2)^2/(4\pi)$, quadratic in
$\rho=\pi i+2j$. An area quadratic in $\rho$ forces the contour to be a straight
line $\rho=\text{const}$, exactly as the nested triangles of
Figure~\ref{fig:contour} show; the corollary merely measures how thick the
boundary layer around that line is.

\begin{proof}[Proof of Corollary~\ref{cor:contour}]
The argument uses only the pointwise estimate of Theorem~\ref{thm:main} to fix the sign
of $\Tar_{i,j}-V$, not the bijection~\ref{P:bijection} (which enters only the intuition
of Figure~\ref{fig:contour}).
Put $\rho=\pi i+2j$, so $\rho+2=2N+(\pi-2)i$ and $2N\le\rho+2\le\pi N$; then
\eqref{eq:main} reads $\Tar_{i,j}=(\rho+2)^2/(4\pi)+O((\rho+1)^{4/3})$. Set
$x=\rho+2$ and $s=2\sqrt{\pi V}$. Let $A$ be the implied constant of the
$O((\rho+1)^{4/3})=O(V^{2/3})$ error in \eqref{eq:main}, and set $K:=4\pi A/(3\sqrt\pi)$.
For cells with $\tfrac12 s\le x\le2s$, if $\abs{x-s}\ge KV^{1/6}$ then, since
$x+s\ge\tfrac32 s=3\sqrt{\pi V}$,
\[
  \frac{\abs{x^2-s^2}}{4\pi}=\frac{\abs{x-s}\,(x+s)}{4\pi}\ge\frac{3\sqrt\pi\,K}{4\pi}\,V^{2/3}
  =A\,V^{2/3},
\]
which meets the error term; so the sign of $\Tar_{i,j}-V=(x^2-s^2)/(4\pi)+O(V^{2/3})$ is
that of $x-s$, giving the stated inclusions with boundary $x=s\pm KV^{1/6}$. For the remaining cells the quadratic term already decides the sign:
if $x\le\tfrac12 s$ then $\Tar_{i,j}\le x^2/(4\pi)+O(x^{4/3})\le\tfrac14 V+o(V)<V$,
and if $x\ge2s$ then $\Tar_{i,j}\ge x^2/(4\pi)-O(x^{4/3})\ge 4V-o(V)>V$; both differ
from $V$ by $\Theta(V)$, far exceeding $O(V^{2/3})$.
\end{proof}

\section{Concluding remarks}\label{sec:remarks}

\subsection*{The finer error term} The proven $O(N^{4/3})$ is far from what the
numbers show. Exhaustively over the $1{,}125{,}750$ cells with $N\le1500$
($M\le1.8\times10^6$), the square-root deviation $\sqrt{\Tar_{i,j}}-L_{i,j}$ lies in
$[-1.02,\,1.24]$ with mean $\approx0.16$ (Table~\ref{tab:conj}); it is oscillatory in
sign, and the extremes sit near the two edges. Sampling further---to
$M\approx5\times10^7$, concentrating near the edges---widens the observed range only to
about $(-1.1,\,1.8)$, with no sign of divergence. In the interior the deviation is
nearly constant: binned by $\alpha=i/N$ over $N\in[800,1200]$, every decile mean falls
in $[0.156,\,0.180]$. This points to the sharper
\begin{equation}\label{eq:conjecture}
  \sqrt{\Tar_{i,j}}=L_{i,j}+O(1),\qquad\text{equivalently}\qquad
  \Tar_{i,j}=\frac{(\pi i+2j+2)^2}{4\pi}+O(N),
\end{equation}
under which the contour width of Corollary~\ref{cor:contour} would be $O(1)$ rather
than $O(V^{1/6})$.

There is a precedent for this last step. On the row edge $i=0$ the sequence is the
Tchoukaillon numbers, for which Erd\H os and Jabotinsky~\cite{ErdosJabotinsky} proved
$\Tar_{0,j}=(j+1)^2/\pi+O(j^{4/3})$ and \emph{conjectured} the $O(j)$ later established
by Broline--Loeb \eqref{eq:brolineloeb}. The exponent improvement $\tfrac43\to1$ we
conjecture in the interior is exactly the Erd\H os--Jabotinsky improvement on the edge,
now sought in every direction. Our theorem falls short of \eqref{eq:conjecture} because
its bound is a \emph{pointwise} one-cell estimate read at a single level; closing the
gap would require the correlations among the nearest-integer remainders $E_m,F_m$
across levels---equivalently, the cancellation in the floor residues $r^X_k,r^Y_k$ of
Lemma~\ref{lem:mass}---which the present argument does not control. We leave
\eqref{eq:conjecture} open.

\begin{table}[t]
\centering
\renewcommand{\arraystretch}{1.15}
\begin{tabular}{@{}lrrrrr@{}}
\hline
 & $i$ & $j$ & $\Tar_{i,j}$ & $L_{i,j}$ & $\sqrt{\Tar_{i,j}}-L_{i,j}$ \\ \hline
interior
   & $200$  & $200$  & $84533$    & $290.65$  & $+0.098$ \\
   & $1000$ & $1000$ & $2106026$  & $1450.98$ & $+0.235$ \\
   & $3000$ & $3000$ & $18939549$ & $4351.81$ & $+0.145$ \\
column $j{=}0$
   & $1000$ & $0$    & $787561$   & $886.79$  & $+0.655$ \\
row $i{=}0$
   & $0$    & $1000$ & $319762$   & $564.75$  & $+0.721$ \\
off-diagonal
   & $1500$ & $300$  & $2248767$  & $1499.16$ & $+0.427$ \\
   & $300$  & $1500$ & $1238902$  & $1112.72$ & $+0.343$ \\
near-edge extremes
   & $1417$ & $30$   & $1618639$  & $1273.27$ & $-1.016$ \\
   & $1329$ & $6$    & $1399453$  & $1181.74$ & $+1.240$ \\
\hline
\end{tabular}
\caption{Evidence for \eqref{eq:conjecture}: the square-root deviation
$\sqrt{\Tar_{i,j}}-L_{i,j}$ is $O(1)$ across sizes and directions. The interior stays
well under $1$; the extremes (last two rows) sit near the two edges.}
\label{tab:conj}
\end{table}

\subsection*{Local spacing} Theorem~\ref{thm:main} does not yield the local spacing
laws $\Tar_{i+1,j}-\Tar_{i,j}\sim(\pi i+2j)/2$ and
$\Tar_{i,j+1}-\Tar_{i,j}\sim(\pi i+2j)/\pi$: an $O(N^{4/3})$ global error permits
adjacent entries to differ from the model by as much, and even the sharper $O(N)$
edge estimates \eqref{eq:andersson}--\eqref{eq:brolineloeb} leave a difference of the
same order as the spacing. A proof of these pointwise laws would be of independent
interest.

\section*{Acknowledgments}
The author thanks Xicheng Yang for stimulating discussions that helped inspire
this work. The large language models ChatGPT (OpenAI) and Claude Code (Anthropic)
were used substantially throughout: in drafting and revising the exposition, in
designing and running the numerical experiments, and as interlocutors whose
discussion and suggestions helped shape and clarify the arguments. The author is
responsible for all statements and for their verification.


\end{document}